\newtheorem{thm}{Theorem}[section]
\newtheorem{prop}[thm]{Proposition}
\newtheorem{rem}[thm]{Remark}
\DeclareMathAlphabet{\mathpzc}{OT1}{pzc}{m}{it}
\numberwithin{equation}{section}
\newcommand{\R}{\mathbb{R}}
\newcommand{\ve}{\varepsilon}
\newcommand{\rd}{\mathrm{d}}
\newcommand{\sh}{\sigma_*}
\newcommand{\dhr}{\mathrel{\lhook\joinrel\relbar\kern-.8ex\joinrel\lhook\joinrel\rightarrow}} 
\title[Heterogeneous dielectric properties in MEMS models]
{Heterogeneous dielectric properties in MEMS models}
\author{Philippe Lauren\c{c}ot}
\thanks{ Partially supported by the French-German PROCOPE project~30718ZG}
\address{Institut de Math\'ematiques de Toulouse, UMR~5219, Universit\'e de Toulouse, CNRS \\ F--31062 Toulouse Cedex 9, France}
\email{laurenco@math.univ-toulouse.fr}
\author{Christoph Walker}
\address{Leibniz Universit\"at Hannover\\ Institut f\" ur Angewandte Mathematik \\ Welfengarten 1 \\ D--30167 Hannover\\ Germany}
\email{walker@ifam.uni-hannover.de}
\date{\today}
\begin{document}

\begin{abstract}
An idealized electrostatically actuated microelectromechanical system (MEMS)  involving an elastic plate with a heterogeneous dielectric material is considered. Starting from the electrostatic and mechanical energies, the  governing evolution equations for the electrostatic potential and the plate deflection are derived from the corresponding energy balance. This leads to a free boundary transmission problem due to a jump of the  dielectric permittivity across the interface separating elastic plate and free space. 
Reduced models retaining the influence of the heterogeneity of the elastic plate under suitable assumptions are obtained when  either the elastic's plate thickness or the aspect ratio of the device vanishes.
\end{abstract}

\keywords{MEMS, free boundary problem, transmission problem, vanishing aspect ratio, variational inequality.}
\subjclass[2010]{35Q74 - 35R35 - 35M33 - 35J87}

\maketitle

\section{Introduction}

Microelectromechanical systems (MEMS) are important parts in modern technology \cite{PeB03, You11}. In an idealized setting, a electrostatically actuated MEMS device consists of a rigid conducting ground plate above which an elastic plate, coated with a thin dielectric layer, is suspended. Holding the two plates at different electrostatic potentials induces a Coulomb force across the device deforming the elastic plate, thereby modifying the shape of the device and transforming electrostatic energy into mechanical energy. The modeling of such an idealized MEMS involves in general the vertical deflection $u$ of the elastic plate and the electrostatic potential $\psi$ in the device. More specifically, let us consider a rigid ground plate of shape $D\subset \mathbb{R}^2$ and an elastic plate with the same shape $D$ at rest and uniform thickness $d>0$ and being made of a possibly non-uniform dielectric material. Denoting the vertical deflection of the bottom of the elastic plate at a point $x=(x_1,x_2)\in  D$ by $u=u(x)$, the deformed elastic plate is given by 
$$
 {\Omega}_2(u):=\left\{(x,z)\in D\times \mathbb{R}\,;\, u(x)<  z <  u(x)+d\right\}\,,
$$
so that its top surface is located at height $z= u(x)+d$, $x\in D$. The elastic plate being suspended at its boundary has zero deflection there, that is, $u(x)=0$ for $x\in\partial  D$. As for the rigid ground plate of shape $D$, it is located at $z=-H$ and it is held at zero potential while the elastic plate is held at a constant potential $V>0$.

The region between the two plates is described by
$$
{\Omega}_1(u):=\left\{(x, z)\in D\times\mathbb{R}\,;\, -H< z< {u}(x)\right\}\,,
$$
and separated from the elastic plate by the interface 
$$
\Sigma(u):=\{(x,z)\in D\times\mathbb{R}\,;\, z=  u(x)\}\,.
$$
Due to the above described geometry of the MEMS under study, the electrostatic potential $\psi_u$ in the device is defined in a non-homogeneous medium which is endowed with the following properties: the medium or vacuum filling the region $\Omega_1(u)$ between the plates is assumed to have constant permittivity $\sigma_1>0$ while the dielectric properties are allowed to vary across the elastic plate material, a fact which is reflected by a non-constant permittivity $\sigma_2 = \sigma_2(x)$ (though independent of the vertical direction at this stage for simplicity). Introducing the electrostatic potentials between the plates $\psi_{u,1}= \psi_{u,1}(x,z)$, $(x,z)\in \Omega_1(u)$, and within the elastic plate $\psi_{u,2}= \psi_{u,2}(x,z)$, $(x,z)\in \Omega_2(u)$, the electrostatic potential $\psi_u$ and the permittivity $\sigma$ in the device are given by
$$
\psi_u:=\left\{\begin{array}{ll} \psi_{u,1} & \text{in}\ \Omega_1(u)\,,\\
\psi_{u,2} & \text{in}\ \Omega_2(u)\,,
\end{array}\right.\qquad 
\sigma:=\left\{\begin{array}{ll} \sigma_1 & \text{in}\ \Omega_1(u)\,,\\
\sigma_2 & \text{in}\ \Omega_2(u)\,.
\end{array}\right.
$$
Since the electrostatic potential $\psi_u$ is defined in the domain 
$$
\Omega(u):=\left\{(x,z)\in D\times\mathbb{R}\,;\, -H<  z <  u(x)+d\right\}= \Omega_1( {u})\cup  \Omega_2(u)\cup \Sigma(u)\,,
$$
which varies according to the deformation $u$, the definition of the former is obviously strongly sensitive to the geometry of the latter. In particular,  the permittivity $\sigma$ features a jump at the interface $\Sigma(u)$ that changes its location with $u$. Moreover, the region $\Omega_1(u)$ between the two plates is connected only when the two plates remain separate, that is, as long as $u(x)>-H$ for all $x\in D$. This corresponds to a stable operating condition of the MEMS device. However, it is expected that there is a critical threshold value for the applied voltage difference $V$ above which the restoring elastic forces can no longer balance the attractive electrostatic forces and the top plate ``pulls in'', that is, sticks onto the ground plate. This \textit{touchdown} phenomenon manifests itself in a situation in which $u(x)=-H$ for some $x\in D$.  When the thickness of the elastic plate is neglected, a touchdown leads to a breakdown of the model (or, alternatively, in a singularity from a mathematical viewpoint) \cite{BGP00, FMPS06, GPW05, Pe02, PeB03}. This, however, need not be the case for plates with positive thickness as it then corresponds to a \textit{zipped state} with the elastic plate lying directly on the ground plate \cite{GB01}.

\bigskip

The first purpose of this work is to derive a mathematical model for the dynamics of the above described MEMS device when the thickness of the elastic plate and hence its dielectric properties are explicitly taken into account. The approach adopted herein is in accordance with \cite{AmEtal, FMCCS05, Jac62, CLWZ13} and prescribes the dynamics of $u$ as the gradient flow of the \textit{total energy} $E(u)=E_m(u)+E_e(u)$. It includes the \textit{mechanical energy}
$$
E_m ( {u}):=\frac{B}{2}\int_{ D}  \left\vert\Delta   u\right\vert^2\,\rd x
+\frac{T}{2}\int_{  D}  \left\vert \nabla  u\right\vert^2\, \rd x\,,
$$
where the first term accounts for plate bending with coefficient $B\ge 0$, and the second term accounts for stretching with coefficient $T\ge 0$. For simplicity we restrict ourselves to vertical deflections and consider only linear bending by neglecting curvature effects from the outset. Moreover, we refrain at this point from modeling the above mentioned zipped states and refer to the subsequent sections for this issue. The \textit{electrostatic energy} is given by
$$
E_e(u):=-\frac{1}{2}\int_{\Omega(u)} \sigma \vert\nabla \psi_u\vert^2\,\rd (x,z)\,,
$$
where the electrostatic potential $\psi_u$ is the maximizer of the Dirichlet integral
$$
-\frac{1}{2}\int_{\Omega(u)} \sigma \vert\nabla \vartheta\vert^2\,\rd (x,z)
$$
among functions $\vartheta\in H^1(\Omega(u))$ satisfying appropriate boundary conditions (see Section~\ref{Sec2}) on $\partial\Omega(u)$. The electrostatic energy $E_e(u)$ then clearly depends on the deflection $u$ not only through the domain of integration $\Omega(u)$ but also through the implicit dependence of the electrostatic potential $\psi_u$ on $u$. Since the derivation of the corresponding mathematical model is based on the energy balance, it requires the computation of the first variation $\delta_u E(u) = \delta_u E_m(u) + \delta_u E_e(u)$ of the total energy $E(u)$ with respect to $u$. The computation of $\delta_u E_e(u)$ turns out to be quite involved as we shall see in Section~\ref{Sec2} below. It yields the electrostatic force $F_e(u)$ exerted on the elastic plate in the form
\begin{subequations}\label{omarhakim}
\begin{equation}
F_e(u):=\delta_u E_e(u)=F_{e,1}(u)+F_{e,2}(u) \label{omarhakim0}
\end{equation}
where
\begin{equation}\label{omarhakim1}
F_{e,1}(u)(x) := - \frac{1}{2}\,\frac{\sigma_1-\sigma_2(x)}{1+ \vert\nabla u(x)\vert^2} \tilde{F}_{e,1}(u)(x)\,,
\end{equation}
with
\begin{align*}
\tilde{F}_{e,1}(u)(x) := & \Big\vert \partial_z\psi_{u,2}(x,u(x))\nabla u(x) +\nabla'\psi_{u,2}(x,u(x))\Big\vert^2 \\
&  \quad + \Big(\nabla'\psi_{u,2}(x,u(x)) \cdot \nabla^\perp u(x)\Big)^2 \\
& \quad +\frac{\sigma_2(x)}{\sigma_1}\Big( \partial_z\psi_{u,2}(x,u(x)) -\nabla u(x)\cdot\nabla'\psi_{u,2}(x,u(x))\Big)^2
\end{align*} 
and
\begin{equation}\label{omarhakim2}
\begin{split}
&F_{e,2}(u)(x):= \frac{1}{2}\sigma_2(x)  \left\vert \nabla\psi_{u,2}(x,u(x)+d)\right\vert^2\,,
\end{split}
\end{equation}
\end{subequations}
where $\nabla':=(\partial_{x_1},\partial_{x_2})$, $\nabla^\perp := (\partial_{x_2},-\partial_{x_1})$.
Let us point out here again that, besides the complexity of the formula \eqref{omarhakim} giving the electrostatic force in terms of $u$ and $\psi_u$, the electrostatic potential $\psi_u$ itself depends in an implicit and intricate way on the deflection $u$ as it solves a transmission elliptic boundary value problem on a domain which varies with respect to $u$. The precise equations are stated in the next section, see \eqref{psi}.
 
If the thickness of the plate is neglected, that is, if $d=0$, then $\psi_{u}=\psi_{u,1}$ and the corresponding electrostatic force reduces to
\begin{equation}\label{omarhakim25}
F_{e}(u)(x):= \frac{1}{2}\sigma_1  \left\vert \nabla\psi_{u}(x,u(x))\right\vert^2
\end{equation}
as already derived in \cite{FMCCS05, LW14b}.

\medskip

A somewhat different approach is pursued in \cite{Pe02} where the electrostatic force $F_e(u)$ is a priori  assumed to be proportional to the square of the gradient trace of the electrostatic potential on the elastic plate. More precisely, if $d>0$, then the electrostatic force is taken to be
\begin{equation}\label{FPelesko}
F_e(u):= F_{e,2}(u) \,,
\end{equation}
where $F_{e,2}(u)$ is defined in \eqref{omarhakim2}, and if $d=0$, then it is given by \eqref{omarhakim25}. Interestingly, both approaches give rise to the same electrostatic force \eqref{omarhakim25}  when the thickness $d$ of the elastic plate is neglected and taken to be equal to zero. However, when the thickness is positive, the electrostatic force \eqref{omarhakim} includes additional terms compared to \eqref{omarhakim25}, which are gathered in $F_{e,1}(u)$ and stem from the discontinuity of the permittivity across the interface $\Sigma(u)$. Observe that $F_{e,1}(u)$ is nonnegative in the physically relevant situation where $\sigma_2\ge \sigma_1$.

\medskip

Having the electrostatic force $F_e(u)$ from \eqref{omarhakim} at hand we are in a position to write the force balance which yields the evolution of the deflection $u=u(t,x)$ in the form 
\begin{equation}\label{theedge}
\begin{split}
&\alpha_0 \partial_t^2u+r\partial_t u +  B  \Delta^2   u - T \Delta u = - F_e(u)\,,\qquad x\in D\,,\quad t>0\,. 
\end{split}
\end{equation}
Here, $\alpha_0 \partial_t^2u$ accounts for inertia forces, $r\partial_t u$ is a damping force, and 
\begin{equation}\label{nirvana}
B \Delta^2   u - T \Delta u  = \delta_u E_m(u)\,.
\end{equation}
Consequently, the evolution of the elastic plate deflection $u$ is given by a semilinear damped wave equation \eqref{theedge} with a nonlocal source term involving, in particular, the square of the trace of the gradient of the electrostatic potential, the latter being a solution to an elliptic transmission problem (see \eqref{psi} below) on a domain depending on $u$. Thus, in addition to a complicated expression for the electrostatic force $F_e(u)$, there is a strong coupling between the deflection $u$ and the electrostatic potential $\psi_u$. To get a better insight into the dynamics it is therefore of utmost importance to derive reduced models, which are more tractable from an analytical point of view. A first step in this direction is to investigate the limiting behavior of the model as the plate thickness $d$ vanishes. In Section~\ref{Sec3.1} we first consider the case where the dielectric permittivity $\sigma_2$ is of order~$1$ with respect to $d$. Amazingly, no influence of the permittivity $\sigma_2$ is retained in this limit. The model we end up with is just \eqref{theedge} with electrostatic force given by \eqref{omarhakim25}. This is in sharp contrast to the second situation that we consider in Section~\ref{RC} in which $\sigma_2$ is of order~$d$. If $\sigma_2=d\sh$, we find that the electrostatic force is then given by
\begin{equation*}
\begin{split}
F_e(u)(x)=&\frac{\sigma_1}{2} |\nabla\psi_u(x,u(x))|^2 - \mathrm{div}\left( \sh(x) (\psi_u(x,u(x))-V)^2 \nabla u (x)\right) \\
& + \sh(x) (1+|\nabla u(x)|^2) (\psi_u(x,u(x))-V) \partial_z\psi_u(x,u(x)) \, . 
\end{split}
\end{equation*}
In addition, the boundary value problem for the electrostatic potential $\psi_u$ is of a different nature (see \eqref{PhRe2} below for details). In both cases the models obtained in the limit $d\to 0$ are still rather complex. 

Section~\ref{Sec4} is then devoted to the classical vanishing aspect ratio limit which amounts to let  $H/\mathrm{diam}(D)$ go to zero \cite{Pe02}. This procedure allows one to express $\psi_u$ as well as the electrostatic force $F_e(u)$ explicitly in terms of $u$. The vanishing aspect ratio model we thus obtain reads (after a suitable rescaling)
\begin{equation}\label{omarhakim27}
\begin{split}
&\gamma^2 \partial_t^2u+\partial_t u +  \beta \Delta^2   u - \tau \Delta u = - \frac{\lambda}{2(1+u+\sigma_2^{-1})^2} \,,\qquad x\in D\,,\quad t>0\,.
\end{split}
\end{equation}
In this situation, the pull-in instability occurs if $u$ reaches the value $-1$ which is the vertical position of the ground plate in rescaled variables. It is worth pointing out that this instability does not correspond to a singularity in the electrostatic force when $\sigma_2^{-1}>0$. This is consistent with the original model \eqref{theedge}, where the positive thickness of the plate prevents the occurrence of a singularity. Touchdown singularities can only occur  at points where the elastic plate is a perfect conductor meaning that $\sigma_2(x)^{-1}=0$. This is contrary to the widely used model derived in \cite{Pe02}, where touchdown singularities may occur only at dielectric points.
 
\section{Model}\label{Sec2}

In this section we provide a detailed derivation of the equation~\eqref{theedge} with electrostatic force given in \eqref{omarhakim} and first recall the description of the device. We assume that the elastic plate is made of a dielectric material and has a uniform thickness $d>0$. Its shape is an open bounded domain $D \subset \R^2$ with sufficiently smooth boundary.\footnote{$D$ can also be an interval in $\R$ in the following and then $x$ is a scalar.} The rigid ground plate is located at $  z=-H$, while the elastic plate is 
$$
 {\Omega}_2(u):=\left\{(x,z)\in D\times \mathbb{R}\,;\, u(x)<  z <  u(x)+d\right\}\,,
$$
where
$u=u(x)$ denotes  the deflection of the bottom of the elastic plate at a point $x=(x_1,x_2)\in  D$. For consistency of the model, we presuppose that $u(x)>-H$ for $x\in D$. Since the elastic plate is suspended above the ground plate, there is zero deflection 
$$
u(x)=0\,,\quad x\in\partial  D\,,
$$
at the boundary of $D$. If the plate  is assumed to be clamped, then one requires in addition a vanishing normal derivative
$$
\partial_\nu  u(x)=0\,,\quad x\in\partial  D\,,
$$
with $\nu$ denoting the outward unit normal on $\partial  D$. The interface 
$$
\Sigma(u):=\{(x,z)\in D\times \mathbb{R}\,;\, z=  u(x)\}
$$
separates the elastic plate ${\Omega}_2(u)$ from the region ${\Omega}_1(u)$ between the two plates, given by
$$
{\Omega}_1(u):=\left\{(x, z)\in D\times \mathbb{R} \,;\, -H< z< {u}(x)\right\}\,.
$$
We set
$$
 {\Omega}( {u}):=\left\{(x,z)\in D\times \mathbb{R} \,;\, -H<  z <  u(x)+d\right\}= {\Omega}_1( {u})\cup  {\Omega}_2( {u})\cup \Sigma(u)\,
$$
and let 
\begin{equation}\label{normal}
{\bf n}_{ \Sigma(u)}(x)=\frac{(-\nabla u(x), 1)}{\sqrt{1+\vert\nabla u(x)\vert^2}}\ , \quad x\in   D\ , 
\end{equation}
denote the unit normal on the interface $ \Sigma(u)$ pointing into $ \Omega_2(u)$.
 
The top surface of the elastic plate is kept at a constant positive voltage value $V$ while  the ground plate is kept at zero voltage. Let $ \sigma_1$ be the constant permittivity of the medium (or vacuum) filling the region between the plates,  and let $ \sigma_2$ be the permittivity of the plate material which we do not assume to be homogeneous in this paper. In fact, we assume that the dependence on the vertical direction (if any) involves the relative position of the plate with respect to its deflection $u$, that is, 
\begin{equation}
\sigma_2(x,z) := \sh(x,z-u(x))\ , \quad (x,z)\in \Omega_2(u)\ , \label{asterix}
\end{equation}
where $\sh$ is a function defined on $\bar D\times [0,d]$. 
Given a pair of real-valued functions $(\vartheta_1,\vartheta_2)$ with $\vartheta_j$ defined on $\Omega_j(u)$, $j=1,2$, we put
$$
\vartheta:=\left\{\begin{array}{ll} \vartheta_1 & \text{in}\ \Omega_1(u)\\
\vartheta_2 & \text{in}\ \Omega_2(u)
\end{array}\right.\,,\qquad 
\sigma:=\left\{\begin{array}{ll} \sigma_1 & \text{in}\ \Omega_1(u)\,,\\
\sigma_2 & \text{in}\ \Omega_2(u)\,.
\end{array}\right.
$$
Also, for a function $f=f(x,z)$ we slightly abuse notation by writing $f[\cdot]$ whenever the $x$-variable is omitted, that is, for example $f[u]=f(x,u(x))$ and $\sh[z-u]=\sh(x,z-u(x))$. 

To define the boundary conditions for the electrostatic potential we fix a smooth function
\begin{subequations}\label{bobbybrown}
\begin{equation}\label{bobbybrown1}
h: \bar D\times (-H,\infty)\times (-H,\infty)\rightarrow \R\;\text{ with }\; h(x,w,w)=V\,,\  h(x,-H,w)=0
\end{equation}
for $(x,w)\in \bar D\times (-H,\infty)$ and define 
\begin{equation}\label{bobbybrown2}
h_v(x,z):=h(x,z,v(x))\,,\quad (x,z)\in \bar D\times (-H,\infty)\,,
\end{equation}
\end{subequations}
for a given function $v:\bar D\rightarrow (-H,\infty)$.

\subsection{Electrostatic Potential}

We now introduce the functional
$$
\mathcal{E}(u,\vartheta):=-\frac{1}{2}\int_{ \Omega(u)} \sigma \vert\nabla \vartheta\vert^2\,\rd (x,z)\,
$$
for a sufficiently smooth deflection $u:\bar D\rightarrow (-H,\infty)$ and $\vartheta\in h_{u+d}+H_{0}^1(\Omega(u))$, where $H_{0}^1(\Omega(u))$ denotes the subspace of the Sobolev space $H^1(\Omega(u))$ consisting of those functions with zero trace on the boundary of $\Omega(u)$. The electrostatic potential $\psi_u$ in $\Omega(u)$ is the maximizer of this functional with respect to $\vartheta\in h_{u+d}+H_{0}^1(\Omega(u))$. Then the \textit{electrostatic energy} of the device is given by
\begin{equation}\label{sugarcane}
E_e(u):=\mathcal{E}(u,\psi_u)=-\frac{1}{2}\int_{ \Omega(u)} \sigma \vert\nabla \psi_u\vert^2\,\rd (x,z)\,.
\end{equation}
To derive the equations for the electrostatic potential $\psi_u$ depending on the given deflection $u$, we use the fact that it is a critical point  of $\mathcal{E}(u,\vartheta)$ with respect to $\vartheta$. Letting $\phi\in C_c^1(\Omega(u))$, we obtain from Gauss' theorem
\begin{equation*}
\begin{split}
\partial_\vartheta \mathcal{E}(u,\vartheta)\phi &=-\int_{\Omega(u)} \sigma \nabla\vartheta\cdot\nabla\phi\,\rd (x,z)\\
&= \int_{\Omega(u)} \phi\, \mathrm{div}\left(\sigma\nabla\vartheta\right)\,\rd (x,z) -\int_{\Sigma(u)}
 \phi\, \llbracket \sigma\nabla \vartheta \rrbracket \cdot {\bf n}_{ \Sigma(u)}\,\rd S\,,
\end{split}
\end{equation*}
where $ \llbracket f \rrbracket:=f_1-f_2$ stands for the jump across the interface $\Sigma(u)$ of a function $f$ defined in $\Omega_1(u)\cup \Omega_2(u)$.
Consequently, the electrostatic potential $\psi_u$ for a given deflection $u$ satisfies
\begin{subequations}\label{psi}
\begin{equation}\label{psi00}
\mathrm{div}\left( \sigma\nabla \psi_u\right)=0 \quad\text{in}\quad  {\Omega}({u})
\end{equation}
with \textit{transmission conditions} on the interface $\Sigma(u)$ 
\begin{equation}\label{TM1}
 \llbracket \psi_u\rrbracket = \llbracket \sigma\nabla \psi_u \rrbracket \cdot {\bf n}_{ \Sigma(u)}= 0 \quad\text{on}\quad  \Sigma(  u)\,,
\end{equation}
 along with the boundary conditions
\begin{equation}
 \psi_u=h_{u+d}\,,\quad  (x,z)\in \partial\Omega(u)\,,\label{psibc1}
\end{equation}
\end{subequations}
with $h_{u+d}$ being defined in \eqref{bobbybrown}.

\subsection{Electrostatic Force}\label{Sec2.2}

We now derive the electrostatic force exerted on the elastic plate by computing the first variation $\delta_u E_e(u)$.  Let a (smooth) deflection $u:\bar D\rightarrow \R$ be fixed, vanishing on $\partial D$ with $u>-H$ in $D$. Let $\psi_u$ be the corresponding solution to \eqref{psi} in $\Omega(u)$ and note that  $\psi_u$ depends non-locally on the deflection $u$. Let $v\in C_0^\infty(D)$ and put $u_s:=u+sv$ for $s\in (-\sigma_0,\sigma_0)$, where $\sigma_0$ is chosen small enough so that $u_s>-H$ in $D$ for all $s\in (-\sigma_0,\sigma_0)$. The goal is to compute
$$
\delta_u E_e(u)v=\frac{\mathrm{d}}{\mathrm{d}s} E_e(u+sv)\vert_{s=0}\,.
$$
Since $u$ is fixed throughout this section, we simply write $\psi$, $\Omega$, $\Omega_1$, $\Omega_2$, and $\Sigma$ instead of $\psi_u$, $\Omega(u)$, $\Omega_1(u)$, $\Omega_2(u)$, and $\Sigma(u)$, respectively. For $s\in (-\sigma_0,\sigma_0)$, we introduce the transformation $\Phi(s):=(\Phi_1(s),\Phi_2(s))$ with
\begin{align*}
\Phi_1(s)(x,z) & := \left(x,z+sv(x)\frac{H+z}{H+u(x)}\right)\,,&& (x,z)\in \Omega_1\ , \\
\Phi_2(s)(x,z) & :=\left(x,z+sv(x)\right)\,,&& (x,z)\in \Omega_2\,.
\end{align*}
Note that
$$
\Omega_\ell(u_s)=\Phi_\ell(s)(\Omega_\ell)\,,\quad \ell=1,2\ ,
$$
and 
$$
\mathrm{det}(\nabla\Phi_1(s))= 1 + \frac{sv}{H+u}>0\ , \qquad \mathrm{det}(\nabla\Phi_2(s))=1\ .
$$ 
Moreover, 
\begin{align}
\partial_s\Phi_1(0)(x,z) & =\left(0,\frac{v(x)(H+z)}{H+u(x)}\right)\,,&& (x,z)\in\Omega_1\,, \label{Phi1s} \\
\partial_s\Phi_2(0)(x,z) & =\left(0,v(x)\right)\,,&& (x,z)\in\Omega_2\,. \label{Phi2s}
\end{align}
Let now $\psi(s)$ be the solution to \eqref{psi} in $\Omega(u_s)$, that is,
\begin{subequations}\label{psiss}
\begin{align}
\mathrm{div}\left( \sigma(s) \nabla {\psi(s)}\right)&=0 &&\text{in}\quad  \Omega(u_s)\,,\label{psis}\\
 \llbracket \psi(s)\rrbracket=\llbracket \sigma(s) \nabla \psi(s) \rrbracket \cdot {\bf n}_{ \Sigma(u_s)}&= 0 &&\text{on}\quad  \Sigma(u_s)\,, \label{TM2s}\\
 \psi(s)&=h_{u_s+d}\,,&&  (x,z)\in \partial\Omega(u_s)\,,\label{psibc1s}
\end{align}
\end{subequations}
where
$$
\sigma(s)(x,z) := \left\{ \begin{array}{lcl}
\sigma_1 & \text{ for } & (x,z)\in \Omega_{1}(u_s)\,, \\
 & &\\
\sh(x,z-u_s(x)) & \text{ for } & (x,z)\in \Omega_{2}(u_s)\,.
\end{array}
\right. 
$$
Then $\psi(0)=\psi$ and $\sigma(0)=\sigma$. To compute the derivative with respect to $s$ of
$$
E_e(u_s)=-\frac{1}{2}\int_{ \Omega(u_s)} \sigma(s) \vert\nabla \psi(s)\vert^2\,\rd (x,z)
$$
we use the Reynolds transport theorem (e.g. see \cite[XII.Theorem 2.11]{AEIII}) and obtain
\begin{align*}
\frac{\mathrm{d}}{\mathrm{d} s} E_e(u_s)\vert_{s=0} = & - \int_{\Omega} \left[  \sigma\nabla\psi\cdot\nabla\partial_s\psi(0) + \mathrm{div}\left(\frac{\sigma}{2}\vert \nabla\psi\vert^2\partial_s\Phi(0)\right)\right]\,\rd (x,z) \\
& \qquad - \frac{1}{2} \int_\Omega \partial_s \sigma(0) |\nabla\psi|^2 \,\rd (x,z) \,.
\end{align*}
From Gauss' theorem, \eqref{psi}, and the definition of $s\mapsto \sigma(s)$ it follows that
\begin{equation}\label{DE1}
\begin{split}
\frac{\mathrm{d}}{\mathrm{d} s} E_e(u_s)\vert_{s=0}=&-\int_{\partial\Omega} \sigma \left(\partial_s\psi(0)\nabla\psi+\frac{1}{2}\vert \nabla\psi\vert^2\partial_s\Phi(0)\right)\cdot {\bf n}_{\partial\Omega}\,\rd S\\
&- \int_{\Sigma}  \left\llbracket \sigma\partial_s\psi(0)\nabla\psi+\frac{\sigma}{2}\vert \nabla\psi\vert^2\partial_s\Phi(0)\right\rrbracket\cdot {\bf n}_{\Sigma}\,\rd S \\
& + \frac{1}{2} \int_{\Omega_2} v \partial_z \sh[z-u] |\nabla\psi_2|^2\, \rd (x,z) \,.
\end{split}
\end{equation}
Note that \eqref{bobbybrown} and \eqref{psibc1s} entail that $\partial_s\psi(0)=0$ on the parts $\partial D\times [-H<z<u+d]$ and $D\times\{-H\}$ of the boundary $\partial\Omega$. Also,  $\partial_s\Phi(0)=0$ on $D\times\{-H\}$ and on $\partial D\times [-H<z<u+d]$ (as $v$ vanishes on $\partial D$). Thus, the corresponding boundary integrals vanish and \eqref{DE1} reduces to
\begin{equation}\label{DE2}
\begin{split}
\frac{\mathrm{d}}{\mathrm{d} s} E_e(u_s)\vert_{s=0}=&-\int_{[z=u+d]} \sigma \left(\partial_s\psi(0)\nabla\psi+\frac{1}{2}\vert \nabla\psi\vert^2\partial_s\Phi(0)\right)\cdot {\bf n}_\Sigma\,\rd S\\
&- \int_{\Sigma}  \left\llbracket \sigma\partial_s\psi(0)\nabla\psi+\frac{\sigma}{2}\vert \nabla\psi\vert^2\partial_s\Phi(0)\right\rrbracket\cdot {\bf n}_\Sigma\,\rd S\\
& + \frac{1}{2} \int_{\Omega_2} v \partial_z \sh[z-u] |\nabla\psi_2|^2\, \rd (x,z) \,.
\end{split}
\end{equation}
We now compute the first integral on the right-hand side of \eqref{DE2}. To that end, we recall that
$$
\psi(s)\big(x,u_s(x)+d\big)=V\,,\quad  x\in D\,,
$$
according to \eqref{bobbybrown} and \eqref{psibc1s}, and thus 
\begin{equation}
\partial_s\psi(0)\big(x,u(x)+d\big)=-\partial_z\psi\big(x,u(x)+d\big)v(x)\,,\quad x\in D\,,\label{x1}
\end{equation}
and
\begin{equation}
\nabla'\psi_2(x,u(x)+d) = - \partial_z \psi_2(x,u(x)+d) \nabla u(x)\,,\quad x\in D\,. \label{x1.1}
\end{equation}
From \eqref{Phi2s}, \eqref{x1}, and \eqref{x1.1} we obtain
\begin{equation}\label{x2}
\begin{split}
\int_{[z=u+d]} & \sigma \left(\partial_s\psi(0)\nabla\psi+\frac{1}{2}\vert \nabla\psi\vert^2\partial_s\Phi(0)\right)\cdot {\bf n}_\Sigma\,\rd S\\
&= \int_{D}\sh [d] \left( \frac{1}{2} \vert \nabla \psi_2[u+d]\vert^2 +\partial_z\psi_2 [u+d] \nabla u \cdot \nabla'\psi_2 [u+d]\right) v\,\rd x  \\
& \qquad - \int_{D}\sh [d] \vert \partial_z\psi_2 [u+d]\vert^2 v\,\rd x  \\
& = -\frac{1}{2}\int_{D}\sh [d] \vert \nabla\psi_2 [u+d]\vert^2 v\,\rd x\,.
\end{split}
\end{equation}
 
We next consider the second term on the right-hand side of \eqref{DE2} which involves an integral over $\Sigma$. First note that \eqref{TM1} implies
$$
\left\llbracket \sigma\partial_s\psi(0)\nabla\psi \right\rrbracket\cdot {\bf n}_\Sigma =\frac{1}{2} \left\llbracket \partial_s\psi(0) \right\rrbracket  \big(\sigma_1\nabla\psi_1 +\sigma_2\nabla\psi_2\big) \cdot {\bf n}_\Sigma \quad\text{on}\ \ \Sigma\,,
$$
while differentiating \eqref{TM2s} with respect to $s$ gives
$$
\left\llbracket \partial_s\psi(0) \right\rrbracket=-\left\llbracket \partial_z\psi \right\rrbracket v \quad\text{on}\ \ \Sigma\,.
$$
Hence,
\begin{equation}\label{x3}
\begin{split}
\left\llbracket \sigma\partial_s\psi(0)\nabla\psi \right\rrbracket\cdot {\bf n}_\Sigma &=-\frac{v}{2}  \left\llbracket \partial_z\psi \right\rrbracket \big(\sigma_1\nabla\psi_1 +\sigma_2\nabla\psi_2\big) \cdot {\bf n}_\Sigma \\
&= \frac{v}{2}\frac{ \left\llbracket \partial_z\psi \right\rrbracket}{\sqrt{1+\vert\nabla u\vert^2}} \big(\sigma_1\nabla'\psi_1+\sigma_2\nabla'\psi_2\big)\cdot\nabla u \\
& \qquad - \frac{v}{2}\frac{ \left\llbracket \partial_z\psi \right\rrbracket}{\sqrt{1+\vert\nabla u\vert^2}} \big(\sigma_1\partial_z\psi_1+\sigma_2\partial_z\psi_2\big)
\end{split} 
\end{equation}
on $\Sigma$.
Now, since \eqref{TM1}, after differentiating it with respect to $x$, implies
\begin{equation}\label{TM1ax}
-\left\llbracket \partial_z\psi\right\rrbracket\nabla u=\left\llbracket\nabla'\psi \right\rrbracket  \quad\text{on}\ \ \Sigma\,,
\end{equation}
while \eqref{Phi1s}-\eqref{Phi2s} entail
$$
\left\llbracket \sigma\vert \nabla\psi\vert^2\partial_s\Phi(0)\right\rrbracket\cdot {\bf n}_\Sigma =\frac{v}{\sqrt{1+\vert\nabla u\vert^2}} \left\llbracket \sigma\vert \nabla\psi\vert^2\right\rrbracket \quad\text{on}\ \ \Sigma\,,
$$ 
it follows from \eqref{x3} that
\begin{equation*}
\begin{split}
&\left\llbracket \sigma\partial_s\psi(0)\nabla\psi+\frac{\sigma}{2}\vert \nabla\psi\vert^2\partial_s\Phi(0)\right\rrbracket\cdot {\bf n}_\Sigma\\
&\qquad =\frac{1}{2}\frac{v}{\sqrt{1+\vert\nabla u\vert^2}}\left\{-\big(\sigma_1\nabla'\psi_1+\sigma_2\nabla'\psi_2\big) \llbracket\nabla'\psi\rrbracket   - \big(\sigma_1\partial_z\psi_1+\sigma_2\partial_z\psi_2\big) \llbracket\partial_z\psi\rrbracket \right\} \\
&\qquad\qquad + \frac{1}{2}\frac{v}{\sqrt{1+\vert\nabla u\vert^2}} \left\llbracket \sigma\vert \nabla\psi\vert^2\right\rrbracket
\end{split} 
\end{equation*}
on $\Sigma$. From this we readily deduce that
\begin{equation}
\begin{split}\label{x4}
\left\llbracket \sigma\partial_s\psi(0)\nabla\psi+\frac{\sigma}{2}\vert \nabla\psi\vert^2\partial_s\Phi(0)\right\rrbracket\cdot {\bf n}_\Sigma =\frac{1}{2}\frac{v}{\sqrt{1+\vert\nabla u\vert^2}} \llbracket\sigma\rrbracket \, \nabla\psi_1\cdot \nabla\psi_2
\end{split} 
\end{equation}
on $\Sigma$. We finally derive an alternative expression for $\nabla\psi_1\cdot \nabla\psi_2$ on $\Sigma$. To this end note that we can express \eqref{TM1ax} in the form
\begin{equation}\label{A}
\nabla'\psi_1+\partial_z\psi_1\nabla u= \nabla'\psi_2+\partial_z\psi_2\nabla u \qquad\text{on}\ \ \Sigma\,,
\end{equation}
while \eqref{TM1} reads
\begin{equation}\label{B}
\sigma_1\partial_z\psi_1-\sigma_1 \nabla u\cdot \nabla'\psi_1= \sigma_2\partial_z\psi_2-\sigma_2 \nabla u\cdot \nabla'\psi_2 \qquad\text{on}\ \ \Sigma\,.
\end{equation}
Taking the inner product of \eqref{A} with $\sigma_1\nabla u$, adding \eqref{B} to the resulting identity, and multiplying the outcome by $\partial_z\psi_2$, one obtains 
\begin{equation}\label{z1}
\sigma_1\left(1+\vert\nabla u\vert^2\right)\partial_z\psi_1 \partial_z\psi_2=  \llbracket\sigma\rrbracket \partial_z\psi_2\nabla u\cdot\nabla'\psi_2 + \left(\sigma_1 \vert\nabla u\vert^2+\sigma_2\right)\vert\partial_z\psi_2\vert^2 \end{equation}
on $\Sigma$. Next, we take the inner product of \eqref{A} with $\sigma_1\nabla'\psi_2$, multiply \eqref{B} by $\nabla u\cdot\nabla'\psi_2$, and subtract the resulting identities. This yields
\begin{equation}\label{100}
\begin{split}
\sigma_1&\left( \nabla'\psi_1\cdot \nabla'\psi_2 +(\nabla u\cdot\nabla'\psi_1)( \nabla u\cdot\nabla'\psi_2)\right)\\
&\qquad\qquad = \sigma_1\vert\nabla'\psi_2\vert^2+\sigma_2 \left(\nabla u\cdot \nabla'\psi_2\right)^2+  \llbracket\sigma\rrbracket \partial_z\psi_2 \nabla u\cdot\nabla'\psi_2 
\end{split}
\end{equation}
on $\Sigma$. One then easily checks that
$$
(\nabla u\cdot\nabla'\psi_1)( \nabla u\cdot\nabla'\psi_2) =\vert\nabla u\vert^2  \nabla'\psi_1\cdot\nabla'\psi_2  -\big(\nabla'\psi_1\cdot \nabla^\perp u \big) \big(\nabla'\psi_2\cdot \nabla^\perp u \big)\,,
$$
where $\nabla^\perp:= (\partial_{x_2} ,-\partial_{x_1} )$. Since \eqref{A} implies $\nabla'\psi_1\cdot \nabla^\perp u  = \nabla'\psi_2\cdot \nabla^\perp u$,
we derive from \eqref{100} that
\begin{equation}\label{z2}
\begin{split}
\sigma_1 \left(1+ \vert\nabla u\vert^2\right) \nabla'\psi_1\cdot \nabla'\psi_2 & = \sigma_1\big(\nabla'\psi_2  \cdot \nabla^\perp u\big)^2 +
\sigma_1 \vert\nabla'\psi_2\vert^2 \\
& \quad +\sigma_2 \left(\nabla u\cdot \nabla'\psi_2\right)^2+  \llbracket\sigma\rrbracket \partial_z\psi_2 \nabla u\cdot\nabla'\psi_2 
\end{split}
\end{equation}
on $\Sigma$. Therefore, combining \eqref{z1} and \eqref{z2} we deduce that
\begin{equation}\label{z3}
\begin{split}
\sigma_1\left(1+ \vert\nabla u\vert^2\right) & \nabla\psi_1\cdot \nabla\psi_2 = \sigma_1\left(1+ \vert\nabla u\vert^2\right) \big( \nabla'\psi_1\cdot \nabla'\psi_2
+\partial_z\psi_1 \partial_z\psi_2\big)\\
& =   2 \llbracket\sigma\rrbracket \partial_z\psi_2\nabla u\cdot\nabla'\psi_2 + \left(\sigma_1 \vert\nabla u\vert^2+\sigma_2\right)\vert\partial_z\psi_2\vert^2 \\
& \qquad +
\sigma_1\big(\nabla'\psi_2  \cdot \nabla^\perp u\big)^2 +
\sigma_1 \vert\nabla'\psi_2\vert^2+\sigma_2 \left(\nabla u\cdot \nabla'\psi_2\right)^2\\
&= \sigma_1\left( \vert \partial_z\psi_2\nabla u +\nabla'\psi_2\vert^2+ \big(\nabla'\psi_2  \cdot \nabla^\perp u\big)^2\right)\\
&\qquad +\sigma_2\left( \partial_z\psi_2 -\nabla u\cdot\nabla'\psi_2\right)^2
\end{split}
\end{equation}
on $\Sigma$. Gathering \eqref{x4} and \eqref{z3} gives 
\begin{equation}\label{z4}
\begin{split}
\int_{\Sigma} & \left\llbracket \sigma\partial_s\psi(0)\nabla\psi+\frac{\sigma}{2}\vert \nabla\psi\vert^2\partial_s\Phi(0)\right\rrbracket\cdot {\bf n}_\Sigma\,\rd S\\
&\qquad= \frac{1}{2}\int_D \frac{v \llbracket \sigma\rrbracket}{1+ \vert\nabla u\vert^2} \Big\{ \vert \partial_z\psi_2[u]\nabla u +\nabla'\psi_2[u]\vert^2+ \big(\nabla'\psi_2[u]  \cdot \nabla^\perp u\big)^2\\
& \qquad\qquad\qquad\qquad\qquad\qquad\   +\frac{\sigma_2[u]}{\sigma_1}\left( \partial_z\psi_2[u] -\nabla u\cdot\nabla'\psi_2[u]\right)^2\Big\}\,\rd x\,.
\end{split}
\end{equation}
Finally observe that
\begin{equation}
\int_{\Omega_2} \partial_z\sh [z-u] \vert \nabla\psi_2\vert^2 v\,\rd (x,z) = \int_{D} v \int_{u}^{u+d} \partial_z\sh [z-u] \vert \nabla\psi_2\vert^2 \, \rd z \,\rd x\,. \label{z4.1}
\end{equation}
Consequently, we infer from \eqref{DE2}, \eqref{x2}, \eqref{z4}, and \eqref{z4.1} that the first variation of $E_e$ reads
\begin{equation}\label{FD}
\begin{split}
\delta_u E_e(u)=& \frac{1}{2} \int_{u}^{u+d} \partial_z\sh [z-u] \vert \nabla\psi_2[z]\vert^2 \, \rd z  +  \frac{1}{2}\sh [d]  \left\vert \nabla\psi_2 [u+d]\right\vert^2 \\
&-  \frac{1}{2}\,\frac{\llbracket \sigma\rrbracket}{1+ \vert\nabla u\vert^2} \left\{ \Big\vert \partial_z\psi_2[u]\nabla u +\nabla'\psi_2[u]\Big\vert^2+ \Big(\nabla'\psi_2[u]  \cdot \nabla^\perp u\Big)^2\right. \\
& \qquad\qquad\qquad\qquad\qquad\left. +\frac{\sh[0]}{\sigma_1}\Big( \partial_z\psi_2[u] -\nabla u\cdot\nabla'\psi_2[u]\Big)^2\right\} \,.
\end{split}
\end{equation}
Note that the second term of $\delta_u E_e(u)$ is always non-negative.  If the permittivity of the medium filling the region between the plates is smaller than the permittivity of the dielectric material the plate is made of, then the third term in $\delta_u E_e(u)$ is also non-negative since $\llbracket \sigma\rrbracket =\sigma_1-\sigma_2\le 0$. This is the case in many applications where the region between the plates is vacuumized. Finally, the first term of $\delta_u E_e(u)$ is nonlocal and need not have a constant sign but we emphasize that it vanishes if the permittivity of the plate is independent of the vertical direction as in \cite{Pe02}.
In fact, an interesting mathematical consequence of \eqref{FD} is that the electrostatic energy $E_e(u)$ is monotonically increasing with respect to the deflection $u$ as soon as $\sh[\cdot]$ is a non-decreasing function and $\sigma_1\le\sh$.

\begin{rem}\label{rem1d}
If $D$ is a one-dimensional interval, then formula \eqref{FD} for $\delta_u E_e(u)$ is still valid after setting $\nabla^\perp u := 0$.
\end{rem}

\begin{rem}\label{R4.1}
The electrostatic force $F_e(u)=\delta_u E_e(u)$ acting on the elastic plate found in \eqref{FD} markedly differs from the one taken in \cite{Pe02}, where the last term on the right-hand side of \eqref{FD}, involving the jump of the permittivity, is missing (the first term anyway does not come into play in \cite{Pe02} since no vertical variation in the permittivity is considered). The reason for this is that in the latter reference the electrostatic force is not derived as the first variation with respect to $u$ of the electrostatic energy  $E_e(u)$ as done above, but is assumed to be given \textit{a priori} by $\sigma_2 [u] \left\vert \nabla\psi_2 [u]\right\vert^2/2$ (corresponding to the second term in \eqref{FD}).
\end{rem}

\subsection{Mechanical Forces}

The mechanical energy $E_m ( {u})$ of the device includes three contributions. We first account for plate bending and external stretching by the terms
$$
\frac{B}{2}\int_{D}  \left\vert\Delta   u\right\vert^2\,\rd x + \frac{T}{2}\int_{  D}  \left\vert \nabla  u \right\vert^2\, \rd x\,,
$$
where $B$ is the product of Young's modulus with area moment of inertia of the cross section of the plate and $T$ is the coefficient of the axial tension force. We hence neglect nonlinear elasticity effects as well as internal stretching effects and take into account only vertical deflections. Finally, we model the natural fact that the upper plate cannot penetrate the ground plate by adding a constraint term which we choose to be
$$
\int_D \mathbb{I}_{[-H,\infty)} (u)\,\rd x\,.
$$
Here, $\mathbb{I}_{[-H,\infty)}$ denotes the indicator function of the closed interval $[-H,\infty)$ on which it takes the value zero and the value $\infty$ on its complement. 
Consequently, the mechanical energy reads
$$
E_m ( {u}):=\frac{B}{2}\int_{  D}  \left\vert\Delta   u \right\vert^2\,\rd x
+\frac{T}{2}\int_{  D}  \left\vert \nabla  u \right\vert^2\, \rd x +\int_D \mathbb{I}_{[-H,\infty)} (u)\,\rd x\,.
$$
It readily follows that the mechanical force is given by
\begin{equation}\label{obelix}
\delta_u E_m ( {u})=B \Delta^2   u - T \Delta u + \partial\mathbb{I}_{[-H,\infty)}(u)
\,,
\end{equation}
where $\partial\mathbb{I}_{[-H,\infty)}(u)$ is the subdifferential of the indicator function $\mathbb{I}_{[-H,\infty)}$. Recall that given $u\in L_2(D)$ satisfying $u\ge -H$ a.e. in $D$, a function $\zeta\in L_2(D)$ belongs to $\partial\mathbb{I}_{[-H,\infty)}(u)$ if and only if it satisfies the variational inequality
\begin{equation}\label{gutemine}
0\ge \int_D \zeta (v-u)\,\rd x \quad \text{ for all }\quad v\in L_2(D) \ \text{ with }\ v\ge -H\ \text{ a.e. in $D$}\,.
\end{equation}
Clearly, $\zeta\equiv 0$ if $u>-H$, that is, as long as the gap between the elastic plate and the ground plate is positive.

\begin{rem}\label{remPenalty}
The unilateral side condition $u\ge -H$ can also be modeled by a penalty term involving  the Heaviside function which amounts to replace $\partial\mathbb{I}_{[-H,\infty)}(u)$ in \eqref{obelix} by $- s\, {\rm Heav}(-H-u)$ with a sufficiently large number $s$.
\end{rem}

\subsection{Governing Equations for $(u,\psi_u)$}

To obtain now a complete model for the deflection $u$ and the electrostatic potential $\psi_u$, we include all forces and add a damping force. Thus, the evolution equation for the deflection $u$ reads
\begin{subequations}\label{umodel}
\begin{align}
\alpha_0\partial_t^2u+r\partial_t u &+  B  \Delta^2   u - T \Delta u +\zeta \nonumber\\
&= -\frac{1}{2} \int_{u}^{u+d} \partial_z\sh [z-u] \vert \nabla\psi_{u,2}[z]\vert^2 \, \rd z - \frac{1}{2}\sh[d]  \left\vert \nabla\psi_{u,2} [u+d]\right\vert^2 \nonumber \\
&\quad\ +  \frac{1}{2}\,\frac{\llbracket \sigma\rrbracket}{1+ \vert\nabla u\vert^2} \left\{ \Big\vert \partial_z\psi_{u,2}[u]\nabla u +\nabla'\psi_{u,2}[u]\Big\vert^2+ \Big(\nabla'\psi_{u,2}[u]  \cdot \nabla^\perp u\Big)^2\right. \label{eq} \\
& \qquad\qquad\qquad\qquad\qquad\left. +\frac{\sh[0]}{\sigma_1}\Big( \partial_z\psi_{u,2}[u] -\nabla u\cdot\nabla'\psi_{u,2}[u]\Big)^2\right\}\,, \nonumber
\end{align}
for $t>0$ and $x\in D$ with $\zeta(t)$ belonging to $\partial\mathbb{I}_{[-H,\infty)}(u(t))$ for $t>0$ (i.e. satisfying \eqref{gutemine}), supplemented with boundary conditions
\begin{equation}\label{ubca}
u= B\, \partial_\nu u=0\quad \text{on}\quad  \partial D\,, \quad t>0\,,
\end{equation}
\end{subequations}
and some initial conditions. The electrostatic potential $\psi_u$ satisfies 
\begin{subequations}\label{psimodel}
\begin{equation}\label{psisa}
\mathrm{div}\left( \sigma\nabla \psi_u\right)=0 \quad\text{in}\quad  {\Omega}({u})\,, \quad t>0\,,
\end{equation}
with \textit{transmission conditions} on the interface $\Sigma(u)$, 
\begin{equation}\label{TM1a}
 \llbracket \psi_u\rrbracket = \llbracket \sigma\nabla \psi_u \rrbracket \cdot {\bf n}_{ \Sigma(u)}= 0 \quad\text{on}\quad  \Sigma(u)\,, \quad t>0\,,
\end{equation}
 along with the boundary condition
\begin{equation}
 \psi_u=h_{u+d}\,,\quad  (x,z)\in \partial\Omega(u)\, \quad t>0\,.\label{psibc1a}
\end{equation} 
\end{subequations}
Recall that $h_{u+d}$ is defined in \eqref{bobbybrown}.

\section{The Thin Plate Limit $d\rightarrow 0$}\label{Sec3}

We next derive equations corresponding to \eqref{umodel}, \eqref{psimodel} in the limit $d\to 0$ of a thin elastic plate, the purpose of this derivation being twofold: besides the obvious goal of obtaining reduced models which are likely to be more tractable for theoretical and numerical investigations, we also aim at determining how the heterogeneity of the elastic plate -- reflected through the non-constant permittivity $\sigma_2$ -- impacts the limit models. The starting point is to identify the electrostatic potential in the limit $d\to 0$. Recall that, given a deflection $u$, the electrostatic potential $\psi_u$ satisfying \eqref{psimodel} (and thus depending on $d$) is a maximizer of the energy functional
$$
\mathcal{E}(u,\vartheta)=-\frac{\sigma_1}{2}\int_{\Omega_1(u)}\vert\nabla\vartheta\vert^2\,\rd (x,z)-\frac{1}{2}\int_D \int_{u}^{u+d} \sigma_2 \vert\nabla\vartheta\vert^2\,\rd z\rd x
$$
with respect to $\vartheta$ satisfying the boundary conditions \eqref{psibc1a}.
To find the limit of this functional as $d\to 0$,  we use $\Gamma$-convergence techniques  along the lines of \cite{AB86}. This then also allows us to derive the  corresponding force exerted on the thin elastic plate as in Section~\ref{Sec2.2}. 

Throughout this section we fix a smooth deflection
$u:\bar D\rightarrow (-H,M)$ with $M>0$ satisfying 
\begin{equation}\label{bcu}
u=\partial_\nu u=0\quad \text{on}\quad \partial D\,.
\end{equation}
Then
$$
 \Omega^d(u):=\Omega_1(u)\cup \Sigma(u)\cup \Omega_2^d(u) \subset \Omega_0:=D\times (-H,M+1)
$$
for $d\le 1$, where 
$$
 {\Omega}_2^d(u):=\left\{(x,z)\in D\times\mathbb{R}\,;\, u(x)<  z <  u(x)+d\right\}\,.
$$
We investigate two cases: first when the permittivity of the elastic plate is independent of $d$  and then when it scales with the plate thickness $d$.

\subsection{The Case $\sigma_2=O(1)$}\label{Sec3.1}

We here consider the case in which the dielectric profile of the elastic plate is of order~$1$ compared to the plate's thickness $d$. We thus assume that 
\begin{equation}
\sigma_2(x,z):= \sh(x,z-u(x))\,, \qquad (x,z)\in \Omega_2^d(u)\,, \label{O(1)}
\end{equation}
where $\sh$ is a continuous function on $\bar D\times [0,1]$ independent of $d$ and satisfying $\sh\ge \sigma_0>0$ for some constant $\sigma_0$. We set
\begin{equation*}
\begin{split}
G_d(u,\theta):=&\frac{\sigma_1}{2}\int_{\Omega_1(u)}\vert\nabla(\theta+h_{u+d})\vert^2\,\rd (x,z)\\
& + \frac{1}{2}\int_D\int_{u}^{u+d} \sh[z-u]\,\vert \nabla(\theta+h_{u+d})\vert^2\,\rd z\rd x\,,\quad \theta\in H_{0}^1(\Omega^d(u))\,,
\end{split}
\end{equation*}
and
$$
G_d(u,\theta):=\infty\,,\quad \theta\in L_2(\Omega_0)\setminus H_{0}^1(\Omega^d(u))\,.
$$
Moreover, we introduce
$$
G_0(u,\theta):=\frac{\sigma_1}{2}\int_{\Omega_1(u)}\vert\nabla(\theta+h_{u})\vert^2\,\rd (x,z)\,,\quad \theta\in H_{0}^1(\Omega_1(u))\,,
$$
and
$$
G_0(u,\theta):=\infty\,,\quad \theta\in L_2(\Omega_0)\setminus H_{0}^1(\Omega^d(u))\,.
$$

\subsubsection{Reduced Electrostatic Energy when $\sigma_2=O(1)$}\label{REE1}

The next result on $\Gamma$-convergence of the energies follows exactly as in \cite{AB86, BCF80}. We omit details here but refer to the next section for a similar computation in a more complicated situation. 

\begin{prop}\label{GC1}
Let $\sigma_2$ be given by \eqref{O(1)}. If $\theta\in H_{0}^1(\Omega_1(u))$, then 
$$
\Gamma-\lim_{d\to 0} G_d(u,\theta)  =G_0(u,\theta)\quad \text{in}\quad L_2(\Omega_0)\,.
$$
\end{prop}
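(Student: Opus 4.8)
The plan is to establish $\Gamma$-convergence by verifying the two usual conditions in the definition: the liminf inequality for every sequence converging in $L_2(\Omega_0)$, and the existence of a recovery sequence for every target function $\theta \in H_0^1(\Omega_1(u))$. Throughout, one extends $G_0(u,\cdot)$ by $+\infty$ outside $H_0^1(\Omega_1(u))$ (identifying $H_0^1(\Omega_1(u))$ with functions in $L_2(\Omega_0)$ that vanish off $\Omega_1(u)$), so only sequences with bounded energy matter.

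First I would prove the \emph{liminf inequality}: let $\theta_d \to \theta$ in $L_2(\Omega_0)$ with $\liminf_d G_d(u,\theta_d) < \infty$; passing to a subsequence, we may assume $\theta_d \in H_0^1(\Omega^d(u))$ and $\sup_d G_d(u,\theta_d) < \infty$. Since $\sh \ge \sigma_0 > 0$ on $\bar D \times [0,1]$ and $h_{u+d} \to h_u$ in, say, $H^1(\Omega_0)$ (by smoothness of $h$ and of $u$, together with $u+d \to u$), the energy bound gives a uniform bound on $\|\nabla \theta_d\|_{L_2(\Omega^d(u))}$, hence on $\|\theta_d\|_{H^1}$ after using the Poincaré inequality on $\Omega^d(u) \subset \Omega_0$ with the zero boundary trace. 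Thus a further subsequence converges weakly in $H^1$ to some limit, which by the $L_2$-convergence must be $\theta$; moreover $\theta_d$ vanishes on $\Omega_2^d(u)$ in the limit since $|\Omega_2^d(u)| \to 0$, so $\theta \in H_0^1(\Omega_1(u))$. Restricting the integral in $G_d$ to $\Omega_1(u)$ (discarding the nonnegative plate contribution) and using weak lower semicontinuity of the convex functional $\vartheta \mapsto \frac{\sigma_1}{2}\int_{\Omega_1(u)} |\nabla(\vartheta + h_u)|^2$, together with $h_{u+d}\to h_u$, yields $\liminf_d G_d(u,\theta_d) \ge G_0(u,\theta)$.

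Next comes the \emph{recovery sequence}: given $\theta \in H_0^1(\Omega_1(u))$, one must produce $\theta_d \in H_0^1(\Omega^d(u))$ with $\theta_d \to \theta$ in $L_2(\Omega_0)$ and $\limsup_d G_d(u,\theta_d) \le G_0(u,\theta)$. The natural construction is to extend $\theta$ across the interface $\Sigma(u)$ into the thin layer $\Omega_2^d(u)$ so as to meet the outer boundary condition at $z = u+d$: concretely, set $\theta_d := \theta$ on $\Omega_1(u)$ and, on $\Omega_2^d(u)$, interpolate linearly in $z$ between the trace $\theta[u] + h_u[u] - h_{u+d}[u]$ on $\Sigma(u)$ and the value $0$ (i.e. $\theta_d + h_{u+d} = V$) on the top face $\{z = u+d\}$, after subtracting $h_{u+d}$ appropriately so that $\theta_d \in H_0^1(\Omega^d(u))$. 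A short computation shows the gradient of this linear-in-$z$ extension has $\partial_z$-component of size $O(1/d)$ but is integrated over a layer of thickness $d$, so the $|\partial_z|^2$ contribution is $O(1/d)$ \emph{unless} the trace mismatch $\theta[u] + h_u[u] - h_{u+d}[u]$ is itself $o(1)$ — and indeed it is $O(d)$ since $h$ is smooth and $h(x,u(x),u(x)) = V = h(x, u(x)+d, u(x)+d)$ up to first order is \emph{not} automatic, so one must be slightly careful; the cleanest fix is to use for the top-face value $h_{u+d}$ and for the interface value $h_u$ and note their difference on $\{z=u\}$ tends to $0$ in the relevant trace norm. The tangential derivatives $\nabla'$ are bounded independently of $d$, so their contribution over the layer is $O(d) \to 0$; hence $G_d(u,\theta_d) \to \frac{\sigma_1}{2}\int_{\Omega_1(u)} |\nabla(\theta + h_u)|^2 = G_0(u,\theta)$.

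The main obstacle is the recovery-sequence step, specifically getting the thin-layer energy to vanish: one needs the boundary data to be compatible across the collapsing layer to leading order, which requires care in how $h$, $h_u$, and $h_{u+d}$ are matched on $\Sigma(u)$ versus the top face, and also a density argument — first prove the $\limsup$ bound for $\theta \in C_c^\infty(\Omega_1(u))$ where traces are harmless, then extend to general $\theta \in H_0^1(\Omega_1(u))$ by density of smooth functions and lower semicontinuity of $\Gamma-\limsup$. Since the paper explicitly says this follows exactly as in \cite{AB86, BCF80} and defers the genuinely delicate version to the next section, I would keep this argument brief and refer to those sources for the layer estimates, emphasizing only the two points above (uniform $H^1$ bound from $\sh \ge \sigma_0$, and the $o(1)$ trace-mismatch needed for the linear extension).
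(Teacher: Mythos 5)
Your two-step strategy (liminf inequality plus recovery sequence, deferring the layer estimates to \cite{AB86, BCF80}) is exactly what the paper has in mind: it omits the proof of Proposition~\ref{GC1} altogether, citing those references and the detailed analogous computation for the harder case $\sigma_2=O(d)$. Your liminf step is sound (uniform $H^1$ bound from $\sh\ge\sigma_0$, weak convergence on $\Omega_1(u)$, $h_{u+d}\to h_u$, lower semicontinuity). However, the side claim that the limit lies in $H_0^1(\Omega_1(u))$ ``since $|\Omega_2^d(u)|\to 0$'' is not a valid inference: the shrinking volume of the layer says nothing about the trace of the limit on $\Sigma(u)$, and Section~\ref{RC} of the paper shows precisely that with a different scaling the finite-energy limits only lie in $H_B^1(\Omega_1(u))$ and may have nonzero trace there, although the layer shrinks in the same way. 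For the statement as given this claim is fortunately not needed (the limit point $\theta$ is assumed to lie in $H_0^1(\Omega_1(u))$); if one wants the accompanying compactness assertion, the correct argument is the trace estimate $|\theta_d(x,u(x))|^2\le d\int_u^{u+d}|\partial_z\theta_d|^2\,\rd z$, available because $\theta_d$ vanishes on the top face $\{z=u+d\}$, combined with the \emph{unweighted} layer energy bound.

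The real gap is in the recovery step, and it comes from overlooking the key structural fact of this case: $\theta\in H_0^1(\Omega_1(u))$ has \emph{zero} trace on $\Sigma(u)$. Because of this, the natural recovery sequence is simply the extension of $\theta$ by zero to the layer, which belongs to $H_0^1(\Omega^d(u))$ and gives $G_d(u,\theta)=\frac{\sigma_1}{2}\int_{\Omega_1(u)}|\nabla(\theta+h_{u+d})|^2\,\rd(x,z)+\frac12\int_D\int_u^{u+d}\sh[z-u]\,|\nabla h_{u+d}|^2\,\rd z\,\rd x$, whose layer term is $O(d)$; no interpolation, no mismatch analysis and no density argument are needed. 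Your interpolated extension, as written, is not even admissible: on the lateral part $\partial D\times(0,d)$ of the layer boundary it takes the values $\big(V-h(x,u(x),u(x)+d)\big)(u(x)+d-z)/d$, which do not vanish in general, so your $\theta_d\notin H_0^1(\Omega^d(u))$ and $G_d(u,\theta_d)=\infty$ by the paper's definition (this would require an additional cutoff near $\partial D$ to repair). Moreover, your mismatch $\theta[u]+h_u[u]-h_{u+d}[u]$ contains $\theta[u]$, which for a general $H^1$ function is not $o(1)$; it vanishes here exactly because of the $H_0^1$ constraint, a fact your argument never invokes, and your density fallback fixes only the $\theta$-part of the mismatch, not the lateral boundary condition. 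Replacing the construction by the zero extension removes both problems and completes the proof.
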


Recalling the relation $\mathcal{E}(u,\vartheta)=-G_d(u,\vartheta-h_{u+d})$ for $\vartheta\in h_{u+d}+ H_{0}^1(\Omega^d(u))$ and $d>0$, it follows from Proposition~\ref{GC1} that in the limit $d\to 0$ the electrostatic energy $E_e(u)$ is given by
$$
E_e(u)=-\frac{\sigma_1}{2}\int_{\Omega_1(u)}\vert\nabla\psi_u\vert^2\,\rd (x,z)\,,
$$
where $\psi_u-h_{u}$ is a critical point of $G_0(u,\cdot)$ in $H_{0}^1(\Omega_1(u))$. Thus, the electrostatic potential $\psi_u$ solves the elliptic problem
\begin{subequations}\label{psimodel1}
\begin{equation}\label{psisa1}
\Delta \psi_u =0 \quad\text{in}\quad  \Omega_1({u})\,,
\end{equation}
 along with the boundary condition
\begin{equation}
 \psi_u=h_{u} \quad\text{on}\quad \partial\Omega_1(u)\,.\label{psibc1a1}
\end{equation}
\end{subequations}
One now argues as in Section~\ref{Sec2.2} to compute the electrostatic force which reads
$$
F_e(u)= \delta_u E_e(u)=\frac{\sigma_1}{2}\vert\nabla\psi_u[u]\vert^2\,.
$$

\begin{rem}
It is worth pointing out that the elastic force retains no effects of the dielectric properties of the elastic plate in the limit $d\rightarrow 0$ when $\sigma_2=O(1)$.
\end{rem}

\subsubsection{Reduced Model when $\sigma_2=O(1)$}

The mechanical forces being still given by \eqref{obelix}, we obtain from Section~\ref{REE1} that the reduced model for $(u,\psi_u)$ in the thin elastic plate limit $d\rightarrow 0$ reads
\begin{subequations}\label{umodel2}
\begin{equation}\label{eq2}
\begin{split}
\alpha_0 \partial_t^2u+r\partial_t u &+  B \Delta^2   u - T \Delta u = -\frac{\sigma_1}{2}\vert\nabla\psi_u[u]\vert^2\,,\quad x\in D\,,\quad t>0\,,
\end{split}
\end{equation}
supplemented with boundary conditions
\begin{equation}\label{ubca2}
u= B\, \partial_\nu u=0\quad \text{on}\quad  \partial D\,,
\end{equation}
\end{subequations}
and some initial conditions, and where the electrostatic potential $\psi_u$ satisfies \eqref{psimodel1}.

The above free boundary model \eqref{psimodel1}, \eqref{umodel2} is already well-known in the existing literature and is actually the building block in the modeling of MEMS when the  thickness of the elastic plate is neglected from the outset \cite{BGP00, FMP03, PeB03, PeT01}. Let us remark that, in \eqref{psimodel1}, \eqref{umodel2}, the electrostatic potential $\psi_u$ jumps from zero to $V$ at touchdown points $x\in D$ where $u(x)=-H$ according to the boundary condition \eqref{psibc1a}, see \eqref{bobbybrown}. Consequently, a touchdown of the elastic plate on the ground plate induces a singularity in the electrostatic force in that case. This also explains why the obstacle term vanishes. Questions regarding well-posedness and qualitative aspects of this model were investigated in \cite{ELW14, LW14a, LW16a}, an overview being provided in the survey \cite{LWBible}.

\subsection{The Case $\sigma_2=O(d)$}\label{RC}

We next consider the case in which the dielectric profile of the elastic plate scales with the plate's thickness $d$. This corresponds to a  highly-conducting material. More precisely, let 
\begin{equation}\label{O(d)}
\sigma_2(x,z)=d\, \sh(x,z-u(x))\,,\quad (x,z)\in \Omega_2^d(u)\,,
\end{equation}
for some continuous function $\sh$ on $\bar D\times [0,1]$ independent of $d$ satisfying $\sh\ge \sigma_0>0$ for some constant $\sigma_0$. We set
\begin{equation*}
\begin{split}
G_d(u,\theta):=&\frac{\sigma_1}{2}\int_{\Omega_1(u)}\vert\nabla(\theta+h_{u+d})\vert^2\,\rd (x,z)\\
& +
 \frac{d}{2}\int_D\int_{u}^{u+d} \sh[z-u]\,\vert \nabla(\theta+h_{u+d})\vert^2\,\rd z\rd x\,,\quad \theta\in H_{0}^1(\Omega^d(u))\,,
\end{split}
\end{equation*}
and
$$
G_d(u,\theta):=\infty\,,\quad \theta\in L_2(\Omega_0)\setminus H_{0}^1(\Omega^d(u))\,.
$$
Moreover, we define
$$
G_0(u,\theta):=\frac{\sigma_1}{2}\int_{\Omega_1(u)}\vert\nabla(\theta+h_{u})\vert^2\,\rd (x,z)+\frac{1}{2}\int_D  \sh[0]\,\theta[u]^2\, (1+\vert\nabla u\vert^2)\,\rd x
$$
for $\theta\in H_B^1(\Omega_1(u))$ and
$$
G_0(u,\theta):=\infty\,,\quad \theta\in L_2(\Omega_0)\setminus H_B^1(\Omega_1(u))\,,
$$
where
$$
H_B^1(\Omega_1(u)):=\{\theta\in H^1(\Omega_1(u))\,;\,\theta=0 \ \text{ on }\ \partial\Omega_1(u)\setminus \Sigma(u)\}\,.
$$
As shown below, $G_0(u,\cdot)$ turns out to be the $\Gamma$-limit of the functional $G_d(u,\cdot)$ as $d\to 0$. In contrast to the case $\sigma_2=O(1)$ previously studied in Section~\ref{Sec3.1}, the functional $G_0(u,\cdot)$ includes a term retaining the dielectric properties of the elastic plate.

\subsubsection{Reduced Electrostatic Energy when $\sigma_2=O(d)$}

We first identify the $\Gamma$-limit of the functional $G_d(u,\cdot)$ as $d\to 0$.

\begin{prop}\label{GC}
Let $\sigma_2$ be given by \eqref{O(d)}. If $\theta\in H_B^1(\Omega_1(u))$, then 
$$
\Gamma-\lim_{d\to 0} G_d(u,\theta)  =G_0(u,\theta)\quad \text{in}\quad L_2(\Omega_0)\,.
$$
\end{prop}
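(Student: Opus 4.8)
The plan is to verify the two defining properties of $\Gamma$-convergence in $L_2(\Omega_0)$ — the asymptotic lower bound and the existence of a recovery sequence. Throughout I would write, for $\theta\in H_0^1(\Omega^d(u))$, $\vartheta:=\theta+h_{u+d}$ (which carries the values $\vartheta=0$ on $D\times\{-H\}$ and $\vartheta=V$ on the top face $\{z=u+d\}$), and split $G_d(u,\theta)=\tfrac{\sigma_1}{2}\|\nabla\vartheta\|_{L_2(\Omega_1(u))}^2+L_d(\vartheta)$ with $L_d(\vartheta):=\tfrac{d}{2}\int_D\int_u^{u+d}\sh[z-u]\,|\nabla\vartheta|^2\,\rd z\,\rd x$. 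The reason this case differs from Proposition~\ref{GC1} is that the highly conducting slab $\Omega_2^d(u)$ does not disappear but leaves the surface term $\tfrac12\int_D\sh[0]\,\theta[u]^2\,(1+|\nabla u|^2)\,\rd x$, and the geometric weight $1+|\nabla u|^2$ must be tracked with care.

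For the $\liminf$-inequality, given $\theta_d\to\theta$ in $L_2(\Omega_0)$ I may assume $\sup_d G_d(u,\theta_d)<\infty$, so $\theta_d\in H_0^1(\Omega^d(u))$. From the energy bound, $h_{u+d}\to h_u$ in $H^1(\Omega_1(u))$, and the vanishing of $\theta_d$ on $\partial\Omega_1(u)\setminus\Sigma(u)$, Poincaré's inequality gives a uniform $H^1(\Omega_1(u))$-bound, hence (along a subsequence) $\theta_d\rightharpoonup\theta$ in $H^1(\Omega_1(u))$ — so $\theta\in H_B^1(\Omega_1(u))$ — and by compactness of the trace $\theta_d[u]\to\theta[u]$ in $L_2(D)$; weak lower semicontinuity then handles the $\Omega_1(u)$-integral. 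The heart of the matter is $L_d(\vartheta_d)$: I would fix $\delta>0$, set $D_\delta:=\{x\in D:\mathrm{dist}(x,\partial D)>\delta\}$, and, for $d$ small, foliate the portion of $\Omega_2^d(u)$ over $D_\delta$ by the segments $s\mapsto\gamma_{x_0}(s):=(x_0-s\nabla u(x_0),\,u(x_0)+s)$ issued normally to $\Sigma(u)$, for $s\in(0,s_*(x_0))$ with $\gamma_{x_0}(s_*(x_0))$ on the face $\{z=u+d\}$; the implicit function theorem gives $s_*(x_0)=d/(1+|\nabla u(x_0)|^2)+O(d^2)$ uniformly, and $(x_0,s)\mapsto\gamma_{x_0}(s)$ has Jacobian $1+|\nabla u(x_0)|^2+O(d)$. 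Since $\tfrac{\rd}{\rd s}\vartheta_d(\gamma_{x_0}(s))=\nabla\vartheta_d\cdot(-\nabla u(x_0),1)$ and $\vartheta_d$ runs from $\theta_d[u](x_0)+h_{u+d}(x_0,u(x_0))$ at $s=0$ to $V$ at $s_*(x_0)$, Cauchy--Schwarz combined with $|\nabla\vartheta_d\cdot(-\nabla u(x_0),1)|^2\le(1+|\nabla u(x_0)|^2)|\nabla\vartheta_d|^2$, the asymptotics of $s_*$ and of $\int_0^{s_*}\sh^{-1}\,\rd s$ along $\gamma_{x_0}$, and the change of variables, lead to
\begin{equation*}
L_d(\vartheta_d)\ \ge\ \frac{1-o(1)}{2}\int_{D_\delta}\sh[0]\,(1+|\nabla u|^2)\,\bigl(V-\theta_d[u]-h_{u+d}(\cdot,u(\cdot))\bigr)^2\,\rd x\,;
\end{equation*}
passing to the limit $d\to0$ (with $\theta_d[u]\to\theta[u]$ in $L_2(D)$, $h_{u+d}(\cdot,u(\cdot))\to V$ uniformly) and then $\delta\downarrow0$ yields $\liminf L_d(\vartheta_d)\ge\tfrac12\int_D\sh[0]\,\theta[u]^2(1+|\nabla u|^2)\,\rd x$, and adding the two estimates gives $\liminf G_d(u,\theta_d)\ge G_0(u,\theta)$. (If $\liminf G_d(u,\theta_d)=\infty$, or if $\theta\notin H_B^1(\Omega_1(u))$, the inequality is trivial.)

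For the recovery sequence I would first reduce, by density and the continuity of $G_0(u,\cdot)$ along $H^1(\Omega_1(u))$-convergent sequences, to $\theta\in C^\infty(\overline{\Omega_1(u)})$ vanishing near $\overline{\partial\Omega_1(u)\setminus\Sigma(u)}$. Set $\theta_d:=\theta$ on $\Omega_1(u)$, so $\vartheta_d=\theta+h_{u+d}\to\theta+h_u$ in $H^1(\Omega_1(u))$; on the slab, with $a_d(x):=\theta[u](x)+h_{u+d}(x,u(x))$, take the $\sh$-harmonic vertical profile
\begin{equation*}
\vartheta_d(x,z):=a_d(x)+\bigl(V-a_d(x)\bigr)\,G_d(x,z)\,,\qquad G_d(x,z):=\frac{\int_{u(x)}^{z}\sh(x,s-u(x))^{-1}\,\rd s}{\int_0^{d}\sh(x,s)^{-1}\,\rd s}\,,
\end{equation*}
for $u(x)\le z\le u(x)+d$, and $\theta_d:=\vartheta_d-h_{u+d}$ there. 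Then $\theta_d$ has matching traces $\theta[u]$ on both sides of $\Sigma(u)$, vanishes on $\partial\Omega^d(u)$ (on the top since $\vartheta_d=V=h_{u+d}$ there, near $\partial D$ since $a_d\equiv0$ there), and $\theta_d\to\theta$ in $L_2(\Omega_0)$ as the slab has width $d$. A short computation gives $\nabla\vartheta_d=(V-a_d)\,\partial_zG_d\,(-\nabla u,1)+O(1)$ in the slab, with $\partial_zG_d=\sh(x,z-u(x))^{-1}/\!\int_0^d\sh(x,s)^{-1}\,\rd s$, whence $|\nabla\vartheta_d|^2=(V-a_d)^2(\partial_zG_d)^2(1+|\nabla u|^2)+O(1/d)$ uniformly; the remainder contributes $O(d)$ to $L_d(\vartheta_d)$, while the leading part equals $\tfrac{d}{2}\int_D(V-a_d)^2(1+|\nabla u|^2)\bigl(\int_0^d\sh(x,s)^{-1}\,\rd s\bigr)^{-1}\rd x$, which converges to $\tfrac12\int_D\sh[0]\,\theta[u]^2(1+|\nabla u|^2)\,\rd x$ since $a_d\to\theta[u]+V$. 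Hence $\limsup G_d(u,\theta_d)\le G_0(u,\theta)$.

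The step I expect to be the main obstacle is precisely the slab part of the $\liminf$: bounding $|\nabla\vartheta_d|^2$ below by $|\partial_z\vartheta_d|^2$ and optimising only over the vertical variable yields $\tfrac12\int_D\sh[0]\,\theta[u]^2\,\rd x$ and loses the factor $1+|\nabla u|^2$. Recovering it forces one to integrate $\vartheta_d$ along the normals to $\Sigma(u)$ rather than along vertical lines — so that the sharp direction in the Cauchy--Schwarz step is the one in which the optimal profile actually varies — and to handle the small discrepancy between that foliation and $\Omega_2^d(u)$ near $\partial D$, which is why the exhaustion by $D_\delta$ is used; correspondingly, the recovery sequence is the profile whose gradient is aligned with those normals.
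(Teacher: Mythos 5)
Your lower-bound argument is essentially the paper's: you integrate $\vartheta_d=\theta_d+h_{u+d}$ along the (non-normalized) normal field $(-\nabla u,1)$ through the slab, use a weighted Cauchy--Schwarz inequality, the asymptotics $s_*\sim d/(1+|\nabla u|^2)$ and the Jacobian $1+|\nabla u|^2+O(d)$, together with weak $H^1(\Omega_1(u))$ compactness and compactness of the trace to pass to the limit; the paper does exactly this via a global $C^1$-diffeomorphism $\Lambda$ onto $\Omega_2^d(u)$ (using $u=\partial_\nu u=0$ on $\partial D$), whereas your interior exhaustion by $D_\delta$ followed by $\delta\downarrow 0$ is a harmless variant that even avoids the boundary construction. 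That half is fine.

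The recovery sequence, however, has a genuine gap: the function you build is in general \emph{not} admissible, i.e.\ $\theta_d\notin H_0^1(\Omega^d(u))$, so $G_d(u,\theta_d)=\infty$ and the limsup inequality collapses. Your justification ``near $\partial D$ since $a_d\equiv 0$ there'' is false: $a_d=\theta[u]+h_{u+d}(\cdot,u(\cdot))$ and $h_{u+d}(x,u(x))=h(x,u(x),u(x)+d)\approx V\neq 0$, so near $\partial D$ one has $a_d\approx V$. What is actually needed on the lateral part $\partial D\times(0,d)$ of $\partial\Omega^d(u)$ is $\vartheta_d=h_{u+d}$, i.e.\ $a_d+(V-a_d)G_d(x,z)=h(x,z,u(x)+d)$ for all $z$, which forces the given boundary datum $h$ to coincide with your $\sh$-weighted interpolation profile; this fails for a generic $h$ satisfying \eqref{bobbybrown} (already for $h=V(H+z)/(H+w)$ unless $\sh(x,\cdot)$ is constant in the vertical variable). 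In addition, your profile $G_d(x,z)=\int_{u(x)}^z\sh(x,s-u(x))^{-1}\rd s\big/\int_0^d\sh(x,s)^{-1}\rd s$ must be differentiated in $x$, whereas the proposition only assumes $\sh\in C(\bar D\times[0,1])$, so $\theta_d$ need not even lie in $H^1$; and the preliminary density reduction in $H_B^1(\Omega_1(u))$ is asserted without proof. The construction can be repaired (cut off the $h$-dependent correction in an $\eta$-neighborhood of $\partial D$, check that this region contributes $O(d^2/\eta)+O(\eta)$ to the slab energy, and diagonalize in $(d,\eta)$), but as written the step fails. The paper avoids all of this with the simpler ansatz $\theta_d=\varphi_d\theta$, $\varphi_d(x,z)=\max\{0,1-(z-u(x))_+/d\}$, whose only surviving slab contribution is $\tfrac{1}{2d}\int_D\int_u^{u+d}\sh[z-u]\,|\theta|^2(1+|\nabla u|^2)\,\rd z\rd x$, identified in the limit by \cite[Lemma~III.1]{AB86}; this needs no regularity of $\sh$ beyond continuity, no smoothing of $\theta$, and vanishes on the lateral boundary because $\theta$ itself does.
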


\begin{proof}
We follow the lines of \cite{AB86}, the main difference being that the domain $\Omega_2^d(u)$ is initially not para\-metrized  along the normal to $\Sigma(u)$. Since $u$ is a fixed smooth function satisfying \eqref{bcu} throughout the proof,  we omit for simplicity the dependence on $u$ in the notation of the functionals $G_d$, $G_0$, and the sets $\Omega_1$, $\Omega_2^d$, and $\Omega^d$. 

\medskip
\noindent {\bf Step 1: Asymptotic lower semi-continuity.}  It follows from \eqref{bcu} that there is $d_0>0$ such that for any $d\in (0,d_0)$, there is a smooth function $r_d: D\rightarrow (0,\infty)$ such that the mapping
$$
\Lambda: U^d\rightarrow \Omega_2^d\,,\quad (x,s)\mapsto \left(x- \frac{s\nabla u(x)}{\sqrt{1+\vert\nabla u(x)\vert^2}}\,,\, u(x)+ \frac{s}{\sqrt{1+\vert\nabla u(x)\vert^2}}\right)
$$
defines a $C^1$-diffeomorphism, where 
$$
U^d:=\{(x,s)\,;\, x\in D\,,\, 0<s<r_d(x)\}
$$
and 
\begin{equation}
\lim_{d\to 0} \|r_d\|_{L_\infty(D)} = 0 \,. \label{lost001}
\end{equation}
The determinant of its derivative is of the form
\begin{equation}\label{10a}
\mathrm{det}(D\Lambda(x,s))= \sqrt{1+\vert\nabla u(x)\vert^2} +s O(\|u\|_{W_\infty^2(D)})\,,\quad (x,s)\in U^d\,.
\end{equation}
Let $x\in D$ and $d\in (0,d_0)$. According to the definition of $r_d$, there is $y_d\in D$ such that $\Lambda(x,r_d(x))=(y_d,u(y_d)+d)$, that is,
\begin{equation}\label{9}
y_d=x- \frac{r_d(x)\nabla u(x)}{\sqrt{1+\vert\nabla u(x)\vert^2}}\,,\qquad  u(y_d)+d=u(x)+ \frac{r_d(x)}{\sqrt{1+\vert\nabla u(x)\vert^2}}\,,
\end{equation}
from which we obtain the implicit equation
\begin{equation*}
\frac{1}{-r_d(x)}\left[u\left(x- \frac{r_d(x)\nabla u(x)}{\sqrt{1+\vert\nabla u(x)\vert^2}}\right)-u(x)\right]=\frac{d}{r_d(x)}-\frac{1}{\sqrt{1+\vert\nabla u(x)\vert^2}}
\end{equation*}
for $r_d(x)$. Hence, by Taylor's expansion,
\begin{align*}
\left| \frac{d}{r_d(x)} - \sqrt{1+\vert\nabla u(x)\vert^2} \right| & = \left| \frac{d}{r_d(x)} - \frac{1}{\sqrt{1+\vert\nabla u(x)\vert^2}} - \frac{\vert\nabla u(x)\vert^2}{\sqrt{1+\vert\nabla u(x)\vert^2}} \right| \\
& \le |r_d(x)| \frac{|\nabla u(x)|^2}{1+|\nabla u(x)|^2} \|u\|_{W_\infty^2(D)} \\
& \le \|r_d\|_{L_\infty(D)} \|u\|_{W_\infty^2(D)}\,,
\end{align*}
so that, by \eqref{lost001},
\begin{equation}
\lim_{d\to 0}\, \left\| \frac{d}{r_d} - \sqrt{1+\vert\nabla u\vert^2} \right\|_{L_\infty(D)}  = 0\,.
\label{eq:1}
\end{equation}
Now, let $\theta_d\in H_{0}^1(\Omega^d)$ be such that $\theta_d\rightarrow \theta_0$ in $L_2(\Omega_0)$ as $d\to 0$. We claim that
\begin{equation}
G_0(\theta_0)\le\liminf_{d\to 0} G_d(\theta_d)\,.
\label{eq:2}
\end{equation}
First note that we may assume without loss of generality that $(G_d(\theta_d))_{d\le d_0}$ is bounded. Hence, owing to the definition of $G_d$ and the lower bound for $\sigma_*$, there is $c_0>0$ such that, for all $d\in (0,d_0)$,
\begin{equation}\label{8}
\frac{\sigma_1}{2}\int_{\Omega_1}\vert\nabla (\theta_d+h_{u+d})\vert^2\,\rd (x,z)+d\frac{\sigma_0}{2}\int_D\int_u^{u+d} \vert\nabla (\theta_d+h_{u+d})\vert^2\,\rd z\rd x \le c_0\,.
\end{equation}
In particular, since
\begin{equation}
|h_{u+d}(x,z) - h_u(x,z)| \le d \|D h\|_{L_\infty(\Omega_0\times (-H,M+1))}\,, \qquad (x,z)\in \Omega_2^d\,, \label{lost005}
\end{equation}
by \eqref{bobbybrown} we may assume further that $(\theta_d+h_{u+d})_{d\le d_0}$ converges weakly towards $\theta_0+h_u$ in $H^1(\Omega_1)$. This convergence implies not only that $\theta_0 \in H^1(\Omega_1)$ and satisfies
\begin{equation}
\frac{\sigma_1}{2}\int_{\Omega_1(u)}\vert\nabla(\theta_0+h_{u})\vert^2\,\rd (x,z) \le \liminf_{d\to 0} \frac{\sigma_1}{2}\int_{\Omega_1(u)}\vert\nabla(\theta_d+h_{u+d})\vert^2\,\rd (x,z)\,, \label{lost002}
\end{equation}
but also that
\begin{equation}
\theta_d \longrightarrow \theta_0 \quad\text{ in }\quad L_2(\partial\Omega_1)\,, \label{lost003}
\end{equation}
thanks to the compact embedding of $H^1(\Omega_1)$ in $L_2(\partial\Omega_1)$ and \eqref{lost005}. In particular, $\theta_0\in H_B^1(\Omega_1)$, so that $G_0(\theta_0)$ is finite. Furthermore, due to \eqref{lost002}, it suffices to show that
$$
\mathcal{G}_0(\theta_0)\le\liminf_{d\to 0} \mathcal{G}_d(\theta_d)\,,
$$
for the claim \eqref{eq:2} to be true, where
$$
\mathcal{G}_d(\theta):=\left\{\begin{array}{ll}
\displaystyle\frac{d}{2}\displaystyle\int_D\int_{u}^{u+d} \sh[z-u]\,\vert \nabla(\theta+h_{u+d})\vert^2\,\rd z\rd x\,,& \theta\in H_{0}^1(\Omega^d)\,,\\ \\
\infty\,,& \theta\in L_2(\Omega_0)\setminus H_{0}^1(\Omega^d)\,,
\end{array}\right.
$$
and
$$
\mathcal{G}_0(\theta):=\left\{\begin{array}{ll}
\displaystyle\frac{1}{2}\int_D  \sh[0]\,\theta[u]^2\, (1+\vert\nabla u\vert^2)\,\rd x\,,&\theta\in H_B^1(\Omega_1)\,,\\ \\
\infty\,,&\theta\in L_2(\Omega_0)\setminus H_B^1(\Omega_1)\,.
\end{array}\right.
$$
Since $\sh\in C(\bar{D}\times [0,1])$, there holds
$$
\lim_{d\to 0} \sup_{(x,z)\in \Omega_2^d} |\sh(x,z-u(x)) - \sh(x,0)| = 0\, 
$$
and we infer from \eqref{8} that
\begin{align*}
\liminf_{d\to 0} \mathcal{G}_d(\theta_d) & = \liminf_{d\to 0}\,\frac{d}{2}\int_D\int_{u}^{u+d} \sh[0]\,\vert \nabla(\theta_d+h_{u+d})\vert^2\,\rd z\rd x\\
& = \liminf_{d\to 0}\,\frac{d}{2} \int_D \mathcal{H}_d(x)\, \rd x\,,
\end{align*}
where
$$
\mathcal{H}_d(x) := \int_0^{r_d(x)} \left( \sh[0] \vert\nabla(\theta_d+h_{u+d})\vert^2 \right) \circ \Lambda(x,s) \vert \mathrm{det}(D\Lambda(x,s))\vert \,\rd s\,, \qquad x\in D\,.
$$
Next, for $x\in D$ and $d\in (0,d_0)$, the definition of $\Lambda$ and $r_d$ together with \eqref{bobbybrown} and the Cauchy-Schwarz inequality ensure that
\begin{align*}
\left| V-(\theta_d+h_{u+d})(x,u(x)) \right|^2 & = \left| (\theta_d+h_{u+d})(\Lambda(x,r_d(x)) - (\theta_d+h_{u+d})(\Lambda(x,0)) \right|^2 \\
& = \left| \int_0^{r_d(x)} \left( \nabla(\theta_d+h_{u+d}) \right)\circ \Lambda(x,s)) \partial_s \Lambda(x,s) \,\rd s \right|^2 \\
& \le \left( \int_0^{r_d(x)} \frac{|\partial_s\Lambda(x,s)|^2}{\sh[0]\circ \Lambda(x,s) \vert \mathrm{det}(D\Lambda(x,s))\vert} \,\rd s \right) \mathcal{H}_d(x) \\
& =  \frac{\mathcal{H}_d(x)}{\omega_d(x)}\,,
\end{align*}
with
$$
\omega_d(x) := \left( \int_0^{r_d(x)} \frac{1}{\sh[0]\circ \Lambda(x,s) \vert \mathrm{det}(D\Lambda(x,s))\vert} \,\rd s \right)^{-1}\,, \qquad x\in D\,.
$$
Therefore,
\begin{align}
\liminf_{d\to 0} \mathcal{G}_d(\theta_d) & = \liminf_{d\to 0}\,\frac{d}{2} \int_D \mathcal{H}_d(x)\, \rd x \nonumber \\
& \ge \liminf_{d\to 0} \frac{d}{2} \int_D \omega_d(x) \left| V-(\theta_d+h_{u+d})(x,u(x)) \right|^2\,\rd x\,, \label{lost004}
\end{align}
and we are left with identifying the last term of the right-hand side of \eqref{lost004}. To this end, we observe that \eqref{10a} and \eqref{eq:1} entail that
\begin{align*}
\lim_{d\to 0} \frac{1}{d\, \omega_d(x)} & = \lim_{d\to 0} \frac{r_d(x)}{d} \frac{1}{r_d(x)} \frac{1}{\omega_d(x)} = \frac{1}{\sh(x,0) (1+|\nabla u(x)|^2)}
\end{align*}
uniformly with respect to $x\in D$. Combining this convergence with \eqref{lost005} and \eqref{lost003} allows us to pass to the limit in the right-hand side of \eqref{lost004} and conclude that 
$$
\liminf_{d\to 0} \mathcal{G}_d(\theta_d) \ge \mathcal{G}_0(\theta_0)\,,
$$
after recalling that $h_u(x,u(x))=V$ for $x\in D$ by \eqref{bobbybrown}, whence \eqref{eq:2}.

\medskip

\noindent {\bf Step 2: Recovery sequence.} Let $\theta\in H_B^1(\Omega_1)$. We may extend $\theta$ in the $z$-direction so that $\theta$ belongs to $H^1(\Omega_0)$ as well. Defining
$$
\varphi_d(x,z):=\max\left\{0,1-\frac{(z-u(x))_+}{d}\right\}\,,
$$
we get $\theta_d:=\varphi_d\theta\in H_{0}^1(\Omega^d)$ and $\theta_d\rightarrow \theta$ in $L_2(\Omega_0)$. Then
\begin{equation*}
\begin{split}
\limsup_{d\to 0} \mathcal{G}_d(\theta_d) & =
 \limsup_{d\to 0}\,\frac{d}{2}\int_D\int_{u}^{u+d}\, \sh[z-u]\,\vert \varphi_d\nabla\theta+\nabla h_{u+d}+\theta\nabla\varphi_d\vert^2\, \rd z\rd x\\
& =
 \limsup_{d\to 0}\,\frac{d}{2}\int_D\int_{u}^{u+d}\, \sh[z-u]\,\vert \theta\vert^2\, \vert\nabla\varphi_d\vert^2\, \rd z\rd x\\
& =
 \limsup_{d\to 0}\,\frac{1}{2d}\int_D\int_{u}^{u+d}\, \sh[z-u]\,\vert \theta\vert^2\, \left(1+\vert\nabla u\vert^2\right)\, \rd z\rd x\,,
\end{split}
\end{equation*}
whence 
$$
\limsup_{d\to 0} \mathcal{G}_d(\theta_d)= \mathcal{G}_0(\theta)
$$
by \cite[Lemma~III.1]{AB86}. Since $\theta_d=\theta$ in $\Omega_1$, this readily implies that
$$
\limsup_{d\to 0} G_d(\theta_d)  = G_0(\theta)
$$
from which the assertion follows.
\end{proof}


\begin{rem} 
Proposition~\ref{GC} is likely to be true if condition~\eqref{bcu} is replaced by the weaker one $u=0$ on $\partial D$. In this case, however, the parametrization of the domain $\Omega_2^d(u)$  along the normal to $\Sigma(u)$ is more involved, and this is the difficulty to be overcome.
\end{rem}


\subsubsection{Reduced Model when $\sigma_2=O(d)$}\label{SecRs}

Using Proposition~\ref{GC} and arguing as  in Section~\ref{Sec3.1},  the electrostatic energy $E_e(u)$ for a given deflection $u$ reads in the limit $d\to 0$
\begin{equation}
E_e(u) := -\frac{\sigma_1}{2} \int_{\Omega_1(u)} |\nabla\psi_u|^2\,\rd (x,z) - \frac{1}{2} \int_D \sh[0]\, \big(\psi_u[u]-V\big)^2 \big(1+|\nabla u|^2\big)\,\rd x\ , \label{PhRe1}
\end{equation}
where $\psi_u - h_{u} \in H_B^1(\Omega_1(u))$ is a maximizer of 
$$
G_0(u,\theta)=\frac{\sigma_1}{2}\int_{\Omega_1(u)}\vert\nabla(\theta+h_{u})\vert^2\,\rd (x,z)+\frac{1}{2}\int_D  \sh[0]\,\theta[u]^2\, \big(1+\vert\nabla u\vert^2\big)\,\rd x
$$ in 
$$
H_B^1(\Omega_1(u))=\big\{\theta\in H^1(\Omega_1(u))\,;\,\theta=0 \ \text{ on }\ \partial\Omega_1(u)\setminus \Sigma(u)\big\}\,.
$$
Thus $\psi_u$ solves
\begin{subequations}\label{PhRe2}
\begin{equation}
\Delta \psi_u = 0\ , \qquad (x,z)\in \Omega_1(u)\ , \label{PhRe2a} 
\end{equation}
supplemented with the Dirichlet boundary conditions 
\begin{equation}
\psi_u = h_{u} \ , \qquad (x,z)\in \partial\Omega_1(u)\setminus\Sigma(u)\,,
\label{PhRe2b}
\end{equation}
and with mixed boundary conditions on $\Sigma(u)$
\begin{equation}
\sigma_1 \big( \partial_z \psi_u[u] - \nabla u\cdot \nabla'\psi_u[u] \big) + \sh[0] \big(1+\vert\nabla u\vert^2\big) \big(\psi_u[u]-V\big) = 0\ , \quad x\in D\ . \label{PhRe2c}
\end{equation}
\end{subequations}
We now compute the electrostatic force acting on the elastic plate which corresponds to the Fr\'echet derivative of the electrostatic energy $E_e(u)$ with respect to $u$. As in Section~\ref{Sec2.2} we consider $v\in C_0^\infty(D)$ and set $u_s:=u+s v$ for $s\in (-\sigma_0,\sigma_0)$, where $\sigma_0$ is chosen small enough such that $u_s>-H$ and the transformation 
$$
\Phi(s)(x,z) := \left( x , z + s \frac{H+z}{H+u(x)} v(x) \right)\ , \qquad (x,z)\in \Omega_1(u)\ , 
$$
is a $C^1$-diffeomorphism from $\Omega_1(u)$ onto $\Omega_1(u_s)$ for all $s\in (-\sigma_0,\sigma_0)$. We next define for $s\in (-\sigma_0,\sigma_0)$ the solution $\psi(s)$ to \eqref{PhRe2} with $u_s$ instead of $u$, that is, $\psi(s)$ solves
\begin{equation*}
\Delta \psi(s) = 0\ , \qquad (x,z)\in \Omega_1(u_s)\ , 
\end{equation*}
supplemented with Dirichlet boundary conditions on $\partial\Omega_1(u_s)\setminus\Sigma(u_s)$
\begin{equation*}
\psi(s) =  h_{u_s}\ , \qquad (x,z)\in [D\times \{-H\}] \cup [\partial D\times (-H,0)]\ , 
\end{equation*}
and with mixed boundary conditions on $\Sigma(u_s)$
\begin{equation*}
\sigma_1 \big( \partial_z \psi(s)[u_s] - \nabla u_s\cdot \nabla'\psi(s)[u_s] \big) + \sh[0] \big(1+\vert\nabla u_s\vert^2\big) \big(\psi(s)[u_s]-V\big) = 0
\end{equation*}
for $x\in D$. Then, for $s\in (-\sigma_0,\sigma_0)$,
$$
E_e(u_s) = -\frac{\sigma_1}{2} \int_{\Omega_1(u_s)} |\nabla\psi(s)|^2\,\rd (x,z) - \frac{1}{2} \int_D \sh[0]\, \big(\psi(s)[u_s]-V\big)^2 \big(1+|\nabla u_s|^2\big)\,\rd x\ ,
$$
and, using again the Reynolds transport theorem, Gauss' theorem, and \eqref{PhRe2a}
\begin{align*}
\frac{\rd}{\rd s} E_e(u_s)|_{s=0} & = -\sigma_1 \int_{\Omega_1(u)} \left[ \nabla \psi_u \cdot \nabla\partial_s\psi(0) + \mathrm{div}\left( \frac{|\nabla\psi_u |^2}{2} \partial_s\Phi(0) \right) \right]\, \rd(x,z) \\
& \quad - \int_D \sh[0] \big(\psi_u[u]-V\big)^2 \nabla u\cdot \nabla v\,\rd x \\
& \quad - \int_D \sh[0] \big(1+|\nabla u|^2\big) \big(\psi_u[u]-V\big) \big( \partial_s\psi(0)[u] + \partial_z\psi_u[u] v \big)\,\rd x \\
& = -\sigma_1 \int_{\partial \Omega_1(u)} \left[ \partial_s\psi(0) \nabla\psi_u\cdot \mathbf{n}_{\partial\Omega_1(u)} + \frac{|\nabla\psi_u |^2}{2} \partial_s\Phi(0) \cdot \mathbf{n}_{\partial\Omega_1(u)} \right]\,\rd S \\
& \quad - \int_D \sh[0] \big(\psi_u[u]-V\big)^2 \nabla u\cdot \nabla v\,\rd x \\
& \quad - \int_D \sh[0] \big(1+|\nabla u|^2\big) \big(\psi_u[u]-V\big) \big( \partial_s\psi(0)[u] + \partial_z\psi_u[u] v \big)\,\rd x\ .
\end{align*}
Since 
$$
\partial_s\Phi(0)(x,z) = \left( 0, \frac{H+z}{H+u(x)} v(x) \right)\ , \qquad (x,z)\in\Omega_1(u)\ , 
$$
and $\partial_s\psi(0)=0$ on $D\times\{-H\}$ and $\partial D\times (-H,0)$  by \eqref{bobbybrown} and \eqref{bcu}, we further obtain
\begin{align*}
\frac{\rd}{\rd s} E_e(u_s)|_{s=0} & = -\sigma_1 \int_D \left( \partial_z \psi_u[u] - \nabla u\cdot \nabla'\psi_u[u] \right) \partial_s\psi(0)[u]\,\rd x \\
& \quad - \frac{\sigma_1}{2} \int_D |\nabla\psi_u[u]|^2 v \,\rd x + \int_D \mathrm{div}\left( \sh[0] \big(\psi_u[u]-V\big)^2 \nabla u \right) v \,\rd x \\
& \quad - \int_D \sh[0] \big(1+|\nabla u|^2\big) \big(\psi_u[u]-V\big) \big( \partial_s\psi(0)[u] + \partial_z\psi_u[u] v \big)\,\rd x\ .
\end{align*}
Owing to \eqref{PhRe2c}, the contributions involving $\partial_s\psi(0)[u]$ cancel and we end up with
\begin{align*}
\frac{\rd}{\rd s} E_e(u_s)|_{s=0}  = & -\frac{\sigma_1}{2} \int_D |\nabla\psi_u[u]|^2 v\,\rd x + \int_D \mathrm{div}\left( \sh[0] \big(\psi_u[u]-V\big)^2 \nabla u \right) v \,\rd x \nonumber \\
& - \int_D \sh[0] \big(1+|\nabla u|^2\big) \big(\psi_u[u]-V\big) \partial_z\psi_u[u] v \,\rd x\ ,
\end{align*}
so that the electrostatic force exerted on the plate is 
\begin{subequations}\label{PhR}
\begin{equation}
\begin{split}
F_e(u)  :=  &-\frac{\sigma_1}{2} |\nabla\psi_u[u]|^2 - \sh[0] \big(1+|\nabla u|^2\big) \big(\psi_u[u]-V\big) \partial_z\psi_u[u] \\
&   + \mathrm{div}\left( \sh[0] \big(\psi_u[u]-V\big)^2 \nabla u \right)\ .
\end{split} \label{PhRe4}
\end{equation}
Consequently, when $\sigma_2=O(d)$, the evolution of $u$ is given in the thin elastic plate limit $d\to 0$ by 
\begin{equation}\label{PhRe5}
\alpha_0 \partial_t^2 u + r \partial_t u +  B  {\Delta}^2   u - T \Delta u +\zeta = -F_e(u)\ , \qquad x\in D\,,  \quad t>0\,,
\end{equation}
supplemented with clamped boundary conditions 
\begin{equation}
u = B \partial_\nu u = 0\ , \qquad x\in\partial D\,,\quad t>0\,, \label{PhRe5b}
\end{equation} 
\end{subequations}
with $\zeta(t)$ belonging to $\partial\mathbb{I}_{[-H,\infty)}(u(t))$ for $t>0$
and $\psi_u$ solving \eqref{PhRe2}.

\section{Vanishing Aspect Ratio Limit $\ve\to 0$}\label{Sec4}

The previously presented models are complex in that they couple an evolution equation for the deflection $u$ involving a nonlinear nonlocal source term depending on the electrostatic potential $\psi_u$ to an elliptic boundary value problem for the latter on a domain moving according to the evolution of $u$. It is thus worth looking for simpler and more tractable models in order to get a better insight into the dynamics. A well-documented  simplification is the so-called vanishing aspect ratio limit which allows one to express the electrostatic potential $\psi_u$ explicitly in terms of the deflection $u$ and gives rise to models featuring a single equation for $u$ with only a local source term \cite{Pe02, BGP00, EGG10, PeB03, PeT01}. In this limit the vertical extent of the MEMS device is assumed to be much smaller than its horizontal dimension. A prior step is to properly rescale the variables and the unknowns. For simplicity we assume throughout this section that the function $h$ introduced in \eqref{bobbybrown} is explicitly given by 
$$
h(x,z,w) := \frac{V(H+z)}{H+w}\,,\quad (x,z,w)\in D \times (-H,\infty)\times (-H,\infty)\,.
$$

\subsection{Rescaled Equations for the Transmission Model \eqref{umodel}, \eqref{psimodel}}

We introduce dimensionless variables in  equations \eqref{psimodel} for $\psi_u$ and \eqref{umodel} for $u$. More precisely, we scale variables according to
$$
\tilde t := \frac{t}{rL^4}\,, \quad \tilde x:= \frac{ x}{L}\,,\quad  \tilde z:= \frac{z}{H}\,, \quad \tilde u:= \frac{ u}{H}\,, \quad \tilde{\psi}_{\tilde{u},\ell}:= \frac{\psi_{u,\ell}}{V}\,, \quad \tilde \sigma := \frac{\sigma}{\sigma_1}\,, \quad \tilde{\sigma}_* := \frac{\sh}{\sigma_1}\,,
$$
and define the relative thickness $\delta:=d/H$ of the elastic plate and the aspect ratio $\ve:=H/L$ of the device.
Accordingly, we introduce $\tilde D:=\{\tilde x \in\mathbb{R}^2\,; L\tilde x \in D\}$,
$$
\tilde \Omega_1(\tilde u):=\left\{(\tilde x,\tilde z)\in \tilde{D}\times \mathbb{R}\,;\,\, -1<\tilde z<\tilde u(\tilde x)\right\}
$$
and 
$$
\tilde \Omega_2 (\tilde u):=\left\{(\tilde x, \tilde z)\in \tilde D\times \mathbb{R}\,;\,\, \tilde u(\tilde x ) < \tilde z < \tilde u(\tilde x )+\delta\right\}\,
$$
with interface
$$
\tilde \Sigma(\tilde u):=\{(\tilde x, \tilde z) \in \tilde D \times\mathbb{R}\,;\,\,\, \tilde z=\tilde u(\tilde x) \}\,.
$$
We then use these relations in \eqref{umodel} and \eqref{psimodel}  to derive dimensionless equations. Dropping the tilde everywhere, we get for the dimensionless electrostatic potential
\begin{subequations}\label{wyzardpsi}
\begin{align}
\ve^2\mathrm{div}'\left( \sigma\nabla' {\psi_u}\right)+\partial_z(\sigma \partial_z\psi_u)&=0 &&\text{in}\quad  \Omega(u)\,,\label{psiscaled}\\
\llbracket \psi_u\rrbracket= \ve^2\left\llbracket \sigma\nabla'\psi_u\right\rrbracket\cdot \nabla u-\left\llbracket \sigma\partial_z\psi_u\right\rrbracket&= 0  &&\text{on}\quad  \Sigma(u)\,, \label{TM2scaled}\\
 \psi_u&= b_{u+\delta}\,,&&  \text{on}\quad \partial\Omega(u)\,,\label{psibc1scaled}
\end{align}
\end{subequations}
where
$$
\Omega(u)=\left\{(x,z)\in D\times \mathbb{R}\,;\,\, -1<  z< u(x)+\delta\right\}
$$
and
$$
b_{u+\delta}(x,z):=\frac{1+ z}{1+u(x)+\delta}\,,\quad (x,z)\in \Omega(u)\,.
$$
Also, we obtain for the dimensionless deflection of the elastic plate the evolution equation
\begin{subequations}\label{wyzardu}
\begin{equation}\label{eqscaled}
\begin{split}
\gamma^2\partial_t^2u+ &\partial_t u + \beta \Delta^2   u - \tau \Delta u +\zeta =-\lambda g_{\delta,\varepsilon}(u)\,,\qquad x\in D\,,\quad t>0\,,
\end{split}
\end{equation}
with $\zeta(t)$ belonging to $\partial\mathbb{I}_{[-1,\infty)}(u(t))$ for $t>0$ and subject to the boundary conditions
\begin{equation}\label{ubc}
u=\beta\partial_\nu  u=0\,,\quad x\in\partial  D\,,\quad t>0\,,
\end{equation}
where
\begin{align}
g_{\delta,\varepsilon}(u):= & \frac{1}{2}\int_{u}^{u+\delta} \partial_z \sh[z-u] \left( \varepsilon^2 |\nabla'\psi_{u,2}[z]|^2 + |\partial_z\psi_{u,2}[z]|^2 \right) \,\rd z \nonumber \\
& + \frac{1}{2}\sh[\delta] \left(\ve^2\left\vert \nabla'\psi_{u,2} [u+\delta]\right\vert^2+\left(\partial_z \psi_{u,2} [u+\delta]\right)^2\right) \nonumber\\
& + \frac{1}{2}\frac{\sh[0] -1}{1+ \ve^2\vert\nabla u\vert^2} \left\{ \ve^2\Big\vert \partial_z\psi_{u,2}[u]\nabla u +\nabla'\psi_{u,2}[u]\Big\vert^2+ \ve^4 \Big(\nabla'\psi_{u,2}[u]  \cdot \nabla^\perp u\Big)^2\right\} \label{gg} \\
& + \frac{1}{2}\frac{(\sh[0] -1) \sh[0]}{1+ \ve^2\vert\nabla u\vert^2} \Big( \partial_z\psi_{u,2}[u] -\ve^2\nabla u\cdot\nabla'\psi_{u,2}[u]\Big)^2 \,, \nonumber
\end{align}
\end{subequations}
and
$$
\gamma^2:= \frac{\alpha_0}{r^2 L^{4}}\,,\qquad \beta:=B \,,\qquad \tau:=TL^2\,,\qquad \lambda=\lambda(\ve):=\frac{\sigma_1V^2L}{\ve^3}\ .
$$
The rescaled total energy for a given deflection $u$ is 
$$
E(u):=E_m(u)+\lambda E_e(u)
$$
with rescaled mechanical energy
\begin{equation}\label{Emscale}
 E_m ( {u})=\frac{\beta}{2} \int_D\vert\Delta   u\vert^2\,\rd x + \frac{\tau}{2} \int_D\vert\nabla u\vert^2\,\rd x + \int_D\mathbb{I}_{[-1,\infty)}(u)\,\rd x
\end{equation}
and electrostatic energy $\lambda E_e(u)$, where
\begin{equation}\label{Eescale}
E_e(u)=- \frac{1}{2} \int_{ \Omega(u)} \sigma \left(\ve^2\vert\nabla'\psi_u\vert^2 +(\partial_z\psi_u)^2\right)\,\rd (x,z)\,.
\end{equation}
Note that we single out the dependence of the total energy $E$ on the parameter $\lambda$  as the dynamics of the model is very sensitive to the tuning of this parameter. 

\subsection{Vanishing Aspect Ratio Limit for the Transmission Model \eqref{umodel}, \eqref{psimodel}}

We next derive a simplified model from \eqref{umodel}, \eqref{psimodel} by letting the aspect ratio $\ve=H/L$ tend to zero while keeping $\delta=d/H>0$ fixed.
Setting $\ve=0$ in \eqref{wyzardpsi}, it readily follows from \eqref{psiscaled} and \eqref{TM2scaled} that there is a function $A$ independent of $z$ such that
$$
\partial_z\psi_{u,1} (x,z)= A(x)\,,\quad (x,z)\in \Omega_1(u)\,,
$$
and
$$
\sh(x,z-u(x)) \partial_z\psi_{u,2} (x,z)= A(x)\,,\quad (x,z)\in \Omega_2(u)\,.
$$
We then integrate the above equations in $z$ and use the boundary conditions \eqref{psibc1scaled} to obtain
$$
\psi_{u,1} (x,z)= A(x)(1+z)\,,\quad (x,z)\in \Omega_1(u)\,,
$$
and
$$
\psi_{u,2} (x,z)= 1-A(x)\int_z^{u(x)+\delta} \frac{\rd q}{\sh(x,q-u(x))}\,,\quad (x,z)\in \Omega_2(u)\,.
$$
Since $\psi_{u,1}$ and $\psi_{u,2}$ coincide along $\Sigma(u)$ by \eqref{TM2scaled}, we can compute $A$ as
$$
A(x)=\left(1+u(x)+N_\delta(x)\right)^{-1}\,,\quad x\in D\,,
$$
where
$$
N_\delta(x):=\int_{0}^{\delta} \frac{\rd q}{\sh(x,q)}\,,\quad x\in D\,.
$$
We then deduce that
$$
\partial_z\psi_{u,2} (x,z)= \left[ \sh(x,z-u(x))\left(1+u(x)+N_\delta(x)\right) \right]^{-1}\,,\quad (x,z)\in \Omega_2(u)\,.
$$
Setting $\ve=0$ and using the above formula in \eqref{gg},
 the force exerted on the elastic plate is given by
\begin{equation}\label{gsmg}
g_{\delta,0}(u)(x)= \frac{1}{2}\left(1+u(x)+N_\delta(x)\right)^{-2}\,,\quad x\in D\,.
\end{equation}
Let us point out that the electrostatic energy $E_e(u)$ is then
\begin{equation}
E_e(u)=-\frac{1}{2} \int_D\frac{\rd x}{1+u(x)+N_\delta(x)} \label{lost006}
\end{equation}
and thus coincides with the one from \cite[Section 4.4]{AmEtal}. 
Recalling \eqref{wyzardu} we end up with a single equation for the deflection $u$ which reads
\begin{equation}\label{eqqq}
\begin{split}
\gamma^2\partial_t^2u+\partial_t u &+  \beta  \Delta^2   u - \tau \Delta u +\zeta=  -\frac{\lambda}{2 \left(1+u+N_\delta\right)^{2}} \,,\qquad x\in D\,,\quad t>0\,,
\end{split}
\end{equation}
subject to the boundary condition \eqref{ubc}. In \eqref{eqqq}, the function  $\zeta(t) \in\partial \mathbb{I}_{[-1,\infty)}(u(t))$ accounts for the constraint  $u\ge -1$.
Equation \eqref{eqqq} as well as the electrostatic energy $E_e(u)$ depend weakly on dielectric properties of the top plate since $N_\delta\to 0$ as $\delta\to 0$. Hence no such effects are retained in this limit. This is consistent with our findings in Section~\ref{Sec3.1}.

In fact, in the limit $\delta=0$ of zero thickness, equation~\eqref{eqqq} reduces to the commonly used vanishing aspect ratio equation
\begin{equation}\label{eqqqq}
\begin{split}
\gamma^2\partial_t^2u+\partial_t u &+  \beta  \Delta^2   u - \tau \Delta u  =  -\frac{\lambda}{2 \left(1+u\right)^{2}} \,,\qquad x\in D\,,\quad t>0\,.
\end{split}
\end{equation}
 
\begin{rem} 
If the dielectric $\sigma_2$ of the plate is independent of the vertical coordinate, then the electrostatic potential computed above coincides with the one from \cite[Equations~(2.15), (2.17)]{Pe02}. However, the final form $-\lambda/2(1+u)^2$ of the electrostatic force in equation \eqref{eqqqq} does not include any dielectric effects and therefore differs markedly from \cite[Equation~(2.19)]{Pe02} which features prominently such effects. The reason for this discrepancy is that the electrostatic force considered in \cite{EGG10, Pe02} does not correspond to the one from \eqref{eq} derived from the electrostatic energy functional  (see also Remark~\ref{R4.1}).
\end{rem}

The small gap equation~\eqref{eqqqq} for a thin plate has been thoroughly investigated in the last two decades, see e.g. \cite{EGG10, FMPS06, Flo14, GPW05, LWBible, BGP00, LiL12, KLNT11} and the references therein. 
This equation features a singularity when $u$ approaches the value $-1$ which has the following consequences: on the one hand, there is no stationary solution when $\lambda$ exceeds a certain threshold value. On the other hand, if $\lambda$ is sufficiently large, then the solution to the evolution problem does not exist for all times and ceases to exist when $u$ reaches the value~$-1$ at a certain time.

A striking difference between equation~\eqref{eqqqq} for zero thickness  $\delta=0$ and equation~\eqref{eqqq} for positive thickness $\delta>0$ is that no such singularity occurs in the latter due to the constraint $u\ge -1$ (provided $N_\delta>0$, of course, which is the case when the plate is a dielectric material everywhere). Nevertheless, the touchdown phenomenon may still take place, but corresponds to a so-called \textit{zipped state} \cite{GB01} in which the constraint is saturated, meaning that the set of points in $D$ at which $u$ takes the value $-1$ is not empty. Equivalently, $\zeta\not\equiv 0$ in \eqref{eqqq}. Nonetheless, the dynamics of $u$ is then still governed by an evolution equation and there is no model breakdown. Equation~\eqref{eqqq} was also derived in \cite[Equation~(17)]{GB01} in a different set-up, where a layer of insulating material with constant dielectric and thickness $d$ is on top of the ground plate. Zipped states were investigated numerically therein. We also refer to \cite{AmEtal} for other related models.

\subsection{Vanishing Aspect Ratio Limit for the Highly-Conducting Model \eqref{PhRe2}, \eqref{PhR}}\label{SecRV}

To study the limiting behavior in \eqref{PhRe2}, \eqref{PhR} when the aspect ratio $\varepsilon= H/L$ of the device vanishes we scale variables and unknowns as
$$
\tilde{t} := \frac{t}{r L^4}\ , \quad \tilde{x} := \frac{x}{L}\ , \quad \tilde{z} := \frac{z}{H}\ , \quad \tilde{u} := \frac{u}{H}\ , \quad \tilde{\psi}_u := \frac{\psi_u}{V}\ , \quad \tilde{\sigma}_* := \frac{H \sh}{\sigma_1}\ .
$$
Introducing $\tilde{D} := \{\tilde{x}\in\mathbb{R}^2\ ;\ L\tilde{x}\in D\}$ and
$$
\tilde{\Omega}_1(\tilde{u}) := \{ (\tilde{x},\tilde{z})\in \tilde{D}\times \mathbb{R}\ ;\ -1 < \tilde{z} < \tilde{u}(\tilde{x}) \}\ ,
$$
it follows from \eqref{PhRe2} and \eqref{PhR} that, after dropping the tilde, $\psi_u$ solves the rescaled Laplace equation
\begin{subequations}\label{PhRe6}
\begin{equation}
\varepsilon^2 \Delta' \psi_u + \partial_z^2 \psi_u = 0\ , \qquad (x,z)\in \Omega_1(u)\ , \label{PhRe6a} 
\end{equation}
supplemented with Dirichlet boundary conditions on $\partial\Omega_1(u)\setminus\Sigma(u)$
\begin{equation}
\psi_u(x,z) = \frac{1+z}{1+u(x)}\ , \qquad (x,z)\in [D\times \{-1\}] \cup [\partial D\times (-1,0)]\ , \label{PhRe6b}
\end{equation}
and with mixed boundary conditions on $\Sigma(u)$
\begin{equation}
\partial_z \psi_u[u] - \varepsilon^2 \nabla u\cdot \nabla'\psi_u[u] + \sh[0] (1+\varepsilon^2 \vert\nabla u\vert^2) (\psi_u[u]-1) = 0\ , \qquad x\in D\ , \label{PhRe6c}
\end{equation}
\end{subequations}
while the evolution of $u$ is given by
 \begin{subequations}\label{PhRe7}
\begin{equation}
\gamma^2 \partial_t^2 u + \partial_t u +  \beta \Delta^2   u - \tau \Delta u + \zeta = - \lambda g_\ve(u)\ , \qquad x\in D\,,\quad t>0\, , \label{PhRe7a}
\end{equation}
with $\zeta(t) \in \partial\mathbb{I}_{[-1,\infty)}(u(t))$, supplemented with clamped boundary conditions 
\begin{equation}
u = \beta \partial_\nu u = 0\ ,\qquad x\in\partial D\,\quad t>0\,, \label{PhRe7b}
\end{equation} 
and the electrostatic force $-g_\ve(u)$ reads 
\begin{align}
g_\ve(u) & := - \frac{\varepsilon^2}{2} |\nabla'\psi_u[u]|^2 - \frac{1}{2} |\partial_z \psi_u[u]|^2 \nonumber\\
& \qquad - \sh[0] (1+\varepsilon^2 |\nabla u|^2) (\psi_u[u]-1) \partial_z\psi_u[u] \label{PhRe8} \\
& \qquad + \varepsilon^2 \mathrm{div}\left( \sh[0] (\psi_u[u]-1)^2 \nabla u \right)\ .  \nonumber
\end{align}
\end{subequations}
The parameters $\gamma$, $\beta$, $\tau$, and $\lambda$ in \eqref{PhRe7a} are given by
$$
\gamma^2 := \frac{\alpha_0}{r L^4} \ , \quad \beta := B\ , \quad \tau := TL^2\ , \quad \lambda := \frac{\sigma_1 V^2 L}{\varepsilon^3}\, .
$$
Let us now identify the vanishing aspect ratio limit $\varepsilon\to 0$ of \eqref{PhRe6}-\eqref{PhRe7}. We first infer from \eqref{PhRe6a} and \eqref{PhRe6c} that
$$
\partial_z \psi_u(x,z) = \sh(x,0) (1-\psi_u(x,u(x))\ , \qquad (x,z)\in \Omega_1(u)\ ,
$$
hence, taking into account that $\psi_u(x,-1)=0$ for $x\in D$ by \eqref{PhRe6b}, 
$$
\psi_u(x,z) = \sh(x,0) (1-\psi_u(x,u(x)) (z+1)\ , \qquad (x,z)\in \Omega_1(u)\ .
$$
In particular, taking $z=u(x)$, $x\in D$, in the previous identity gives 
$$
\psi_u(x,u(x)) = \frac{\sh(x,0)(1+u(x))}{1 + \sh(x,0)(1+u(x))}\ , \qquad x\in D\ ,
$$
and thus
\begin{equation*}
\psi_u(x,z) = \frac{\sh(x,0) (1+z)}{1 + \sh(x,0) (1+u(x))}\ , \qquad (x,z)\in \Omega_1(u)\ .
\end{equation*}
We next set $\varepsilon=0$ in \eqref{PhRe8} and find that
\begin{equation*}
g_0(u)=  -\frac{1}{2} |\partial_z \psi_u[u]|^2 - \sh[0] (\psi_u[u]-1) \partial_z \psi_u[u] = \frac{1}{2} \left( \frac{\sh[0]}{1+\sh[0](1+u)} \right)^2 
\end{equation*}
for $x\in D$ and $t>0$. Hence we obtain the governing equation for $u$ in the form
\begin{equation}\label{stanleyclarke}
\gamma^2\partial_t^2u+\partial_t u +  \beta  \Delta^2   u - \tau \Delta u +\zeta=  - \frac{\lambda}{2} \left( \frac{1}{1+u+\sh[0]^{-1}} \right)^2
\end{equation}
for $x\in D$ and $t>0$, with $\zeta(t) \in \partial\mathbb{I}_{[-1,\infty)}(u(t))$, supplemented with clamped boundary conditions \eqref{PhRe7b}.

Let us point out that equation~\eqref{stanleyclarke} is similar to equation~\eqref{eqqq} with $N_\delta$ replaced by $1/\sh[0]$, so that the vanishing aspect ratio limits as $\ve\to 0$ of the transmission model \eqref{umodel}, \eqref{psimodel} and of the highly-conducting model \eqref{PhRe2}, \eqref{PhR} give rise to similar equations.

\section{Discussion}

In the present paper we derived models for a MEMS device which take into account the thickness of the elastic plate and its dielectric properties. Our approach relies on the computation of the electrostatic force exerted on the elastic plate as the first variation of the electrostatic energy and thus contrasts with the derivation of related models in the existing literature. The resulting force differs from those of previous works in that it involves additional terms accounting for the jump of the permittivity across the device. Our models also incorporate a constraint  accounting for the fact that the elastic plate cannot penetrate the ground plate. An interesting feature of this constraint is that it prevents the breakdown of the models when pull-in occurs. Alternative models in this direction are proposed in \cite{GB01, LE08, LLG14, LLG15}. We next focused on the derivation of models with reduced complexity. We first considered the case when the thickness $d$ of the elastic plate vanishes. If the dielectric permittivity is of order one with respect to $d$, then the reduced model obtained in the limit $d\to 0$ does not retain any effects of the dielectric. However, such effects still play a role in the reduced model obtained in the limit $d\to 0$ if the dielectric permittivity is of order $d$. We finally performed the classical vanishing aspect ratio limit when the vertical dimension is much smaller compared to the horizontal ones. We obtained an explicit formula for the electrostatic force in terms of the deflection. The final model then only involves a single equation for the deflection and shows different features than corresponding models in the existing literature: the source term is well-defined as long as the permittivity does not vanish and is only singular on the zero set of the permittivity. From a mathematical viewpoint, several questions arise from the previous analysis: besides the well-posedness of the transmission problem \eqref{wyzardpsi}, \eqref{wyzardu} and the highly-conducting model \eqref{PhRe6}, \eqref{PhRe7}, it is also worth investigating the dynamics of the vanishing ratio models \eqref{eqqq} and \eqref{stanleyclarke}, including the existence of zipped stationary states and their possible multiplicity. We plan to investigate further these issues in future works.



\end{document}